\DeclareMathOperator{\End}{End} 
 \DeclareMathOperator{\Ker}{Ker}
\DeclareMathOperator{\Span}{Span} \DeclareMathOperator{\sgn}{sgn}
 \DeclareMathOperator{\Ann}{Ann}
\numberwithin{equation}{section}
\theoremstyle{plain}
\newtheorem{theorem}{Theorem}[section]
\newtheorem{lemma}[theorem]{Lemma}
\newtheorem{proposition}[theorem]{Proposition}
\theoremstyle{definition}
\newtheorem{definition}[theorem]{Definition}
\begin{document}

\title[On Tensor Spaces for Rook Monoid Algebras]{On Tensor Spaces for Rook Monoid Algebras}
\author{Zhankui Xiao}

\address{Xiao: School of Mathematical Sciences, Huaqiao University,
Quanzhou, Fujian, 362021, P. R. China}

\email{zhkxiao@hqu.edu.cn}

\thanks{The work of the author is supported by the National Natural Science Foundation of China
(Grant No. 11301195) and a research foundation of Huaqiao University (Project 2014KJTD14).}

\subjclass[2010]{Primary 20G05, 20M30; Secondary 05E10}
\keywords{Rook monoid, Tensor space, General linear group, Symmetric group}

\begin{abstract}
Let $m,n\in \mathbb{N}$, and $V$ be a $m$-dimensional vector space
over a field $F$ of characteristic $0$. Let $U=F\oplus V$ and $R_n$
be the rook monoid. In this paper, we construct a certain quasi-idempotent
in the annihilator of $U^{\otimes n}$ in $FR_n$, which comes from
some one-dimensional two-sided ideal of rook monoid algebra. We show
that the two-sided ideal generated by this element is indeed the whole
annihilator of $U^{\otimes n}$ in $FR_n$.
\end{abstract}

\maketitle

\section{Introduction}\label{xxsec1}

Let $m,n\in \mathbb{N}$ and $V$ be a $m$-dimensional vector space
over a field $F$ of characteristic $0$. Let $U=F\oplus V$ and ${\rm GL}(V)$
be the general linear group over $V$. We further consider $F$ as the trivial ${\rm GL}(V)$-module.
This allows us to consider $U$ and hence $U^{\otimes n}$ as a ${\rm GL}(V)$-module.
Let $R_n$ be the rook monoid, see Section \ref{xxsec2} for precise definition.
By \cite{Solomon}, there is a left action of $R_n$ on $U^{\otimes n}$ which commutes
with the left action of ${\rm GL}(V)$. Let $\varphi, \psi$ be the natural algebra
homomorphisms:
$$\begin{aligned} \varphi:&\,\,\,
FR_n\rightarrow
\End_{{\rm GL}(V)}\bigl(U^{\otimes n}\bigr),\\
\psi:&\,\,\,
F{\rm GL}(V)\rightarrow\End_{FR_n}\bigl(U^{\otimes n}\bigr),
\end{aligned}$$
respectively. The following results are proved by Solomon \cite[Theorem 5.10 and Corollary 5.18]{Solomon}.

\begin{theorem} \label{xx1.1}
1) Both $\varphi$ and $\psi$ are surjective;

2) if $m\geq n$, then $\varphi$ is an isomorphism.
\end{theorem}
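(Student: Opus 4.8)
The plan is to prove this as an instance of Schur--Weyl duality, built on the observation that $U^{\otimes n}$ is a completely reducible ${\rm GL}(V)$-module: it is a polynomial representation and $\ch F = 0$. Write $\mathcal{A} = \psi(F{\rm GL}(V))$ and $\mathcal{B} = \varphi(FR_n)$, both regarded as subalgebras of $\End_F(U^{\otimes n})$. Since $\mathcal{A}$ is then semisimple, the classical double centralizer theorem will reduce everything to one key input, namely that $\varphi$ is surjective, i.e.\ that $\mathcal{B} = \End_{{\rm GL}(V)}(U^{\otimes n})$. Indeed, because the $FR_n$-action factors through $\varphi$, the surjectivity of $\psi$ is equivalent to $\End_{\mathcal{B}}(U^{\otimes n}) = \mathcal{A}$, which the double centralizer theorem delivers for free once $\mathcal{B}$ equals the full commutant; and the isomorphism statement will come from a dimension count. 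So I would spend the effort on $\varphi$ and obtain the rest formally.

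For the surjectivity of $\varphi$, I would first decompose $U = F \oplus V$ to get the ${\rm GL}(V)$-module decomposition $U^{\otimes n} = \bigoplus_{S \subseteq \{1,\dots,n\}} V_S$, where $V_S \cong V^{\otimes |S|}$ is the summand supported on the positions in $S$. Since $V^{\otimes k}$ is a polynomial representation homogeneous of degree $k$, there are no nonzero ${\rm GL}(V)$-homomorphisms between $V_S$ and $V_T$ when $|S|\neq|T|$, whence
\begin{equation*}
\End_{{\rm GL}(V)}(U^{\otimes n}) \;\cong\; \bigoplus_{k=0}^{n} \bigoplus_{|S|=|T|=k} \Hom_{{\rm GL}(V)}(V_S, V_T).
\end{equation*}
Within a fixed degree $k$, classical $({\rm GL}(V), S_k)$ Schur--Weyl duality (equivalently, the First Fundamental Theorem for ${\rm GL}(V)$) identifies $\Hom_{{\rm GL}(V)}(V_S, V_T)$ with the span of the operators that permute the $k$ active tensor slots while carrying the support from $S$ to $T$. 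The crucial point is that each such operator is realized on the nose by $\varphi(\sigma)$ for the rank-$k$ partial permutation $\sigma \in R_n$ with domain $S$, range $T$, and prescribed bijection $S \to T$. Summing over all $S$, $T$ and all internal bijections then shows that the images $\varphi(\sigma)$ span the whole endomorphism algebra, giving surjectivity of $\varphi$.

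Granting $\mathcal{B} = \End_{{\rm GL}(V)}(U^{\otimes n}) = \End_{\mathcal{A}}(U^{\otimes n})$, the double centralizer theorem for the semisimple algebra $\mathcal{A}$ yields $\End_{\mathcal{B}}(U^{\otimes n}) = \mathcal{A}$, which reads $\End_{FR_n}(U^{\otimes n}) = \psi(F{\rm GL}(V))$, completing part~1). For part~2), I would count dimensions in the displayed decomposition: when $m \geq n \geq k$ the classical duality gives $\dim \End_{{\rm GL}(V)}(V^{\otimes k}) = k!$, the module $V^{\otimes k}$ being faithful over $FS_k$ in this range, so that
\begin{equation*}
\dim \End_{{\rm GL}(V)}(U^{\otimes n}) = \sum_{k=0}^{n} \binom{n}{k}^{2} k! = |R_n| = \dim FR_n,
\end{equation*}
the middle equality being the count of rank-$k$ partial permutations. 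Surjectivity of $\varphi$ together with the equality of these finite dimensions forces $\varphi$ to be an isomorphism.

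The main obstacle I anticipate is the precise bookkeeping in the surjectivity of $\varphi$: one must pin down Solomon's action explicitly enough to verify that a rank-$k$ partial permutation $\sigma$ acts as exactly the expected slot-permutation between $V_S$ and $V_T$, annihilating the components it should, and that the assignment $\sigma \mapsto (S, T, \text{bijection})$ is a bijection onto the indexing set of the decomposition. Matching the projection-to-the-trivial-summand behaviour of rook elements with the vanishing of cross-degree homomorphisms is the delicate step; everything else is either semisimplicity formalism or the invariant-theoretic input from the classical case.
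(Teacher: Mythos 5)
Your overall architecture is sound, and in fact the paper offers no proof of this theorem at all (it is quoted verbatim from Solomon's paper), so a self-contained argument like yours is a reasonable thing to attempt; the decomposition $U^{\otimes n}=\bigoplus_S V_S$, the reduction to classical $({\rm GL}(V),\mathfrak{S}_k)$ Schur--Weyl duality block by block, the double centralizer step for $\psi$, and the dimension count $\sum_{k=0}^n\binom{n}{k}^2k!=\dim FR_n$ for part 2) are all correct in outline. But there is one genuinely false step, located exactly where you flag the delicacy: a rank-$k$ partial permutation $\sigma\in R_n$ with domain $S$ and range $T$ is \emph{not} realized by $\varphi(\sigma)$ ``on the nose'' as the block operator $V_S\to V_T$ extended by zero on the other summands. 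By the definition of the action, the generators $p_j$ only force the slots \emph{outside} $S$ to carry $v_0$; nothing forces the slots inside $S$ to carry vectors from $V$. Consequently $\varphi(\sigma)$ acts nontrivially on every summand $V_{S'}$ with $S'\subseteq S$, mapping $V_{S'}$ to $V_{\sigma(S')}$. The simplest counterexample is $\sigma=1\in R_1$: $\varphi(1)$ is the identity of $U=Fv_0\oplus V$, whereas the block operator attached to the rank-one identity is the projection onto $V$, killing $v_0$; the two differ by $\varphi(p_1)$. So the verification you propose in your final paragraph (``annihilating the components it should'') would fail if carried out literally.

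The repair is short and you should include it. Write $E_\tau$ for the block operator attached to a partial permutation $\tau$ (slot permutation from $V_{\mathrm{dom}(\tau)}$ to $V_{\mathrm{ran}(\tau)}$, zero on all other summands). The computation above gives
$\varphi(\sigma)=\sum_{S'\subseteq S}E_{\sigma|_{S'}}$,
a sum over all restrictions of $\sigma$ to subsets of its domain. This transition matrix is unitriangular with respect to rank, so by induction on $k$ (equivalently, M\"obius inversion over the Boolean lattice of subsets of $S$) each $E_\sigma$ lies in the span of $\{\varphi(\tau)\ |\ \tau\ \text{a restriction of}\ \sigma\}$. Hence
$\Span\{\varphi(\sigma)\}=\Span\{E_\sigma\}=\bigoplus_{|S|=|T|}\Hom_{{\rm GL}(V)}(V_S,V_T)=\End_{{\rm GL}(V)}(U^{\otimes n})$,
which is the surjectivity of $\varphi$; note also that your dimension count for part 2) implicitly uses this corrected identification (the $E_\sigma$, not the $\varphi(\sigma)$, are the operators adapted to the block decomposition). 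With this one fix, the double centralizer argument and the count $\sum_k\binom{n}{k}^2k!$ go through exactly as you wrote them.
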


The above theorem is an analogue, for $R_n$ and ${\rm GL}(V)$, of the
Schur-Weyl duality for symmetric group $\mathfrak{S}_n$ and general linear group ${\rm GL}(V)$.
When $m<n$, the algebra homomorphism $\varphi$ is in general not injective.
Therefore it is natural to ask how to describe the kernel of the
homomorphism $\varphi$, i.e., the annihilator of $U^{\otimes n}$ in the algebra
$FR_n$. This question is closely related to the invariant theory, see \cite{GW,Weyl}.

Let $G$ be an algebraic subgroup of ${\rm GL}(V)$ and $M$ be a $G$-module. One formulation
of the invariant theory for $G$ is to describe the endomorphism algebra $\End_G(M^{\otimes n})$.
It should be remarked that, in the classical invariant theory, $G$-module $M$ is usually set
as the natural representation $V$. The first fundamental theorem of invariant theory
provides generators of $\End_G(M^{\otimes n})$ and the second fundamental theorem of invariant theory
describes all the relations among the generators. From this point of view, the above
Theorem \ref{xx1.1} can be seen as the first fundamental theorem of invariant theory
for general linear group ${\rm GL}(V)$ about the module $U$. Therefore, it is desirable
to give out the second fundamental theorem, i.e., a characterization of the annihilator ideal
of $U^{\otimes n}$ in rook monoid algebra $FR_n$ by its standard generators.

The purpose of this article is to answer the above question. Recently Hu and the author
\cite{HX} proved the second fundamental theorem for symplectic group and Lehrer-Zhang in \cite{LZ}
gave out the second fundamental theorem for orthogonal group, where they deeply used
the different versions of invariant theory. In both symplectic and orthogonal cases,
the annihilator of $n$-tensor space in the specialized Brauer algebra is generated by an
explicitly described quasi-idempotent. Motivated by the articles \cite{HX,LZ}, we construct
a certain quasi-idempotent $Y_{m+1}$ (see Section \ref{xxsec4}) in $\Ker \varphi$ and prove that

\begin{theorem} \label{xx1.2}
With notations as above, if $m<n$, we have
$$
\Ann_{FR_n}\bigl( U^{\otimes n}\bigr)=\langle Y_{m+1} \rangle.
$$
\end{theorem}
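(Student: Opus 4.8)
The plan is to prove the theorem by establishing two inclusions. The inclusion $\langle Y_{m+1}\rangle \subseteq \Ann_{FR_n}(U^{\otimes n})$ follows from the construction in Section~\ref{xxsec4}, where $Y_{m+1}$ is built to lie in $\Ker\varphi$; so the real content is the reverse inclusion $\Ann_{FR_n}(U^{\otimes n}) \subseteq \langle Y_{m+1}\rangle$. For this I would work with the representation theory of the rook monoid algebra $FR_n$. In characteristic $0$, $FR_n$ is semisimple, and its irreducible modules are indexed by partitions $\lambda$ of $k$ for $0 \le k \le n$; correspondingly $FR_n$ decomposes as a direct sum of two-sided ideals (Wedderburn blocks) $B_\lambda$. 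Both $\Ann_{FR_n}(U^{\otimes n})$ and $\langle Y_{m+1}\rangle$ are two-sided ideals, hence each is a sum of a subset of these blocks. The strategy is therefore to identify \emph{exactly which blocks} appear in each ideal and check they coincide.

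First I would determine $\Ann_{FR_n}(U^{\otimes n})$ blockwise. By the double-centralizer / Schur--Weyl philosophy behind Theorem~\ref{xx1.1}, the simple $FR_n$-module $D^\lambda$ occurs in $U^{\otimes n}$ precisely when the corresponding ${\rm GL}(V)$-module is nonzero, which by the standard dimension criterion for $\mathrm{GL}_m$ happens exactly when $\lambda$ has at most $m$ rows, i.e.\ $\ell(\lambda)\le m$. Consequently $\Ker\varphi$ is the sum of the blocks $B_\lambda$ with $\ell(\lambda) > m$, equivalently $\ell(\lambda)\ge m+1$. This is the block-theoretic description of the annihilator that the whole argument must match.

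Second, and this is where the explicit element enters, I would analyze the two-sided ideal $\langle Y_{m+1}\rangle$. Since $Y_{m+1}$ is constructed from a one-dimensional two-sided ideal of the rook monoid algebra associated to a column of length $m+1$ (an antisymmetrizer-type quasi-idempotent), I expect it to generate, under the two-sided multiplication and semisimplicity, precisely the ideal corresponding to all $\lambda$ with a column of length $\ge m+1$, i.e.\ $\ell(\lambda)\ge m+1$. The key computation is to show that $Y_{m+1}$ has nonzero projection onto every block $B_\lambda$ with $\ell(\lambda)\ge m+1$ (so that the generated ideal contains each such block) and zero projection onto every block with $\ell(\lambda)\le m$ (so that it is contained in $\Ker\varphi$, matching the first inclusion). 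The standard tool here is to evaluate $Y_{m+1}$ on each Specht/cell module, using that an antisymmetrizer on $m+1$ symbols kills any representation supported on fewer than $m+1$ rows and acts nontrivially otherwise; a branching or restriction argument from $FR_n$ down to the symmetric group algebras $F\mathfrak{S}_k$ inside it should reduce this to the classical fact about the sign-idempotent of $\mathfrak{S}_{m+1}$.

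I expect the main obstacle to be the second direction of the blockwise projection, namely showing $Y_{m+1}$ hits \emph{every} block with $\ell(\lambda)\ge m+1$ rather than just the single ``column'' block it is visibly built from. Because $R_n$ is a monoid rather than a group, its module category is not simply that of $\mathfrak{S}_n$; the passage between the rook monoid and its associated symmetric group algebras (via the standard idempotent decomposition of $FR_n$, cf.\ the structure used in Theorem~\ref{xx1.1}) must be handled carefully so that the antisymmetrizer's vanishing/nonvanishing behaviour transfers correctly across all the relevant cell modules. Controlling this uniformly over all $\lambda$ with $\ell(\lambda)\ge m+1$, and confirming no spurious cancellation occurs in the two-sided products defining $\langle Y_{m+1}\rangle$, is the technical heart of the proof.
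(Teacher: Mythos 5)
Your overall architecture --- decompose the semisimple algebra $FR_n$ into Wedderburn blocks indexed by partitions $\lambda\vdash k$, $0\leqslant k\leqslant n$, observe that both ideals are sums of blocks, and show that each consists exactly of the blocks with $\ell(\lambda)\geqslant m+1$ --- is the same framework the paper builds in Section~\ref{xxsec3} (Theorem~\ref{xx3.12}) and exploits in Section~\ref{xxsec4}, and your blockwise answer is the correct one. But your proposal has genuine gaps at precisely the two places where the real work lies. First, you take $\langle Y_{m+1}\rangle\subseteq\Ker\varphi$ as ``following from the construction''; it does not. $Y_{m+1}$ is the rook-monoid antisymmetrizer, which contains non-permutation diagram terms $\sum_{D\in{\rm Rd}_{m+1}[1]}\sgn(D)D$, and nothing in its definition makes it obvious that it kills $U^{\otimes n}$. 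The paper's Lemma~\ref{xx4.2} proves this by an explicit computation on simple tensors: repeated indices are killed by the sign argument, and on $v_0\otimes v_1\otimes\cdots\otimes v_m$ the diagram part acts (via Lemma~\ref{xx4.1}) as a signed sum over the coset representatives $\{d^{-1}\,|\,d\in\mathcal{D}_1\}$ of $\mathfrak{S}_{\{1\}}\times\mathfrak{S}_{\{2,\ldots,m+1\}}$ in $\mathfrak{S}_{m+1}$, which exactly cancels the permutation part. (Your text is also mildly circular here: you assume this inclusion at the outset and later propose to re-derive it from the vanishing of projections onto small blocks; either way, one of these needs an actual proof.)

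Second, and more seriously, the step you yourself call the technical heart --- that $Y_{m+1}$ has nonzero component in \emph{every} block with $\ell(\lambda)\geqslant m+1$ --- is left unexecuted, and the mechanism you propose (restriction to symmetric group algebras plus the classical fact about the sign idempotent of $\mathfrak{S}_{m+1}$) does not apply directly, because $Y_{m+1}\notin F\mathfrak{S}_n$: before any branching argument can be invoked, the diagram part of $Y_{m+1}$ must be shown to annihilate suitable elements. This is exactly what the paper's Section~\ref{xxsec3} machinery delivers: each block $I(\lambda)$ has an explicit generator $e(\mathfrak{t}_\lambda)$ (a rook analogue of the Young symmetrizer) whose leading factor is $Y_{\mathcal{C}_1}$ with $\mathcal{C}_1\supseteq\{1,\ldots,m+1\}$, so every diagram term of $Y_{m+1}$ dies against $Y_{\mathcal{C}_1}$ (since $p_iY_{\mathcal{C}_1}=0$ for $i\in\mathcal{C}_1$) while every permutation term contributes $+Y_{\mathcal{C}_1}$, giving $Y_{m+1}e(\mathfrak{t}_\lambda)=(m+1)!\,e(\mathfrak{t}_\lambda)\neq 0$ in one line (Lemma~\ref{xx4.4}); hence $I(\lambda)\subseteq\langle Y_{m+1}\rangle$. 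A Specht-module version of the same idea works (take $e_{\mathfrak{t}}$ for a column-filled tableau with content $\{1,\ldots,r\}$), but you would have to find and prove it. Finally, note a structural difference: your description of $\Ker\varphi$ imports Solomon's full bimodule decomposition (``$R^\lambda$ occurs in $U^{\otimes n}$ iff $\ell(\lambda)\leqslant m$'') as an external input, whereas the paper's cyclic chain $\langle Y_{m+1}\rangle\subseteq\Ker\varphi\subseteq I_{m+1}\subseteq\langle Y_{m+1}\rangle$, with $I_{m+1}=\sum_{\ell(\lambda)\geqslant m+1}I(\lambda)$, needs only the elementary half of that statement (Lemma~\ref{xx4.3}: for $\ell(\lambda)\leqslant m$ there is one explicit tensor with $e(\mathfrak{t}^{\lambda})v_{\underline{i}}\neq 0$) and obtains both hard directions for free.
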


We would like to point out that an analogue of Theorem \ref{xx1.1} for orthogonal group
and rook Brauer algebra (also called partial Brauer algebra) was obtained by
Halverson-delMas \cite{Hd} and Martin-Mazorchuk \cite{MaM} independently. We conjecture
that there exists an analogue of Theorem \ref{xx1.2} for rook Brauer algebra and
will consider this question in a future separate article.

The content of this article is organized as follows. In Section \ref{xxsec2} we recall some basic
knowledge about the structure and representation theory of rook monoids as well as
some combinatorics which are needed later. In Section \ref{xxsec3} we construct the one-dimensional
two-sided ideals in the rook monoid algebra $FR_n$. Furthermore, we can get the block
decomposition of $FR_n$ by the theory of Specht modules. In Section \ref{xxsec4} we prove our main result
Theorem \ref{xx1.2} and the proof will be proceed in three steps.

\section{Preliminaries}\label{xxsec2}

\subsection{Rook monoid}\label{xxsec2.1}

Let $R_n$ be the set of all $n\times n$ matrices that contain at most one entry equal to $1$
in each row and column and zeros elsewhere. With the operation of matrix multiplication,
$R_n$ has the structure of a monoid. The monoid $R_n$ is known both as the {\em rook monoid}
and the {\em symmetric inverse semigroup} \cite{Solomon0}. The number of rank $r$ matrices in $R_n$ is
${n \choose r}^2 r!$ and hence the rook monoid has a total of $\sum_{r=0}^n {n \choose r}^2 r!$
elements.

A presentation of the rook monoid $R_n$ is given in \cite{KM} which is more helpful for us
(see also \cite[Section 2]{Hd}). The rook monoid $R_n$ is generated by
$s_1,\cdots,s_{n-1},p_1,\cdots,p_n$ subject to the following relations:
$$
\begin{aligned}
& s_i^2=1, & 1\leqslant i\leqslant n-1,\\
& s_is_j=s_js_i, & |i-j|>1,\\
& s_is_{i+1}s_i=s_{i+1}s_is_{i+1}, & 1\leqslant i\leqslant n-2,\\
& p_i^2=p_i, & 1\leqslant i\leqslant n,\\
& p_ip_j=p_jp_i, & i\neq j,\\
& s_ip_i=p_{i+1}s_i, & 1\leqslant i\leqslant n-1,\\
& s_ip_j=p_js_i, & |i-j|>1,\\
& p_is_ip_i=p_ip_{i+1}, & 1\leqslant i\leqslant n-1.\\
\end{aligned}
$$
It is clear that the symmetric group $\mathfrak{S}_n \subseteq R_n$.

Now we recall another presentation of $R_n$ by rook $n$-diagram (see \cite{Hd,KM}).
A rook $n$-diagram is a graph consisting of two rows each with $n$ vertices
such that each vertex in the top row is connected to at most one vertex in the
bottom row. We denote ${\rm Rd}_n$ the set of all rook $n$-diagram. For each rook $n$-diagram $D$,
we shall label the vertices in the top row of $D$ by $1,2,\ldots,n$ from left to right,
and label the vertices in the bottom row of $D$ by $1^-,2^-,\ldots,n^-$ also from left to right.
For example, let $D$ be the following rook diagram
\begin{center}
\epsfig{figure=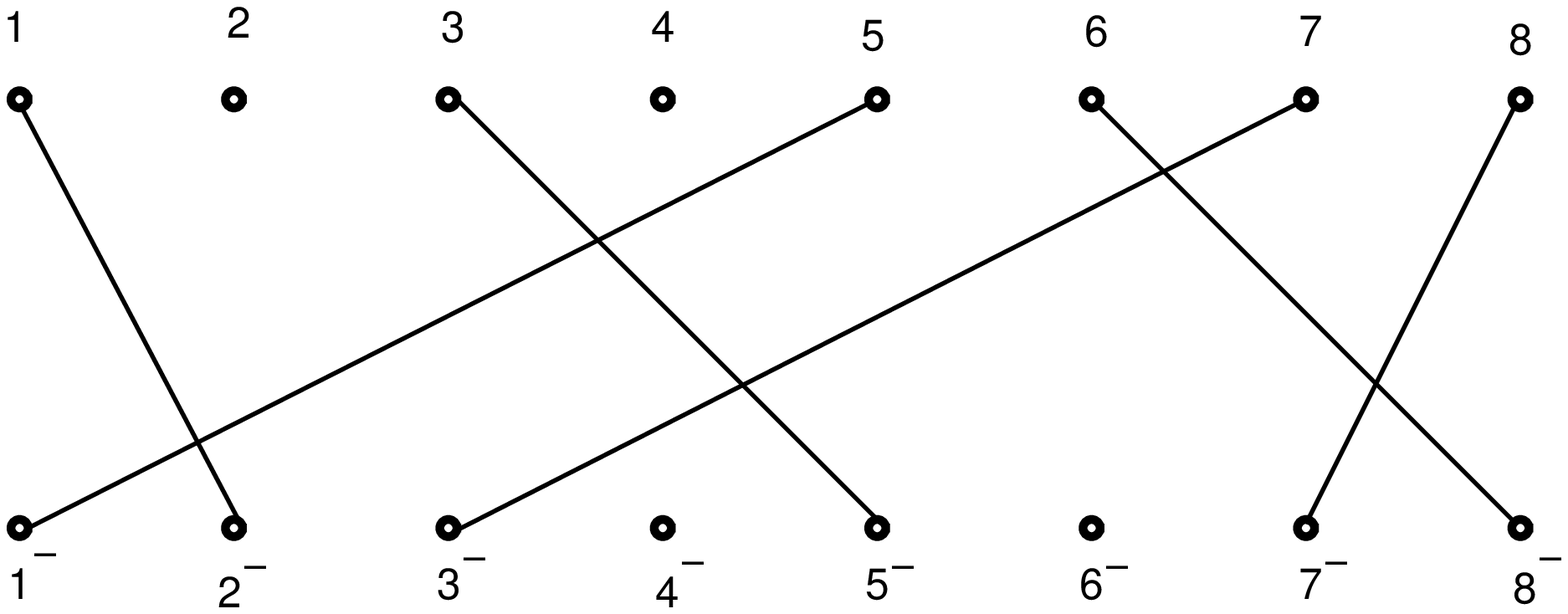,clip,height=3.0cm}
\end{center}
Then $D\in {\rm Rd}_8$. For a rook $n$-diagram, the vertices which are not incident to an edge
are called {\em isolated vertices}.
The multiplication of two rook $n$-diagram is defined using natural
concatenation of diagrams. Precisely, we compose two rook $n$-diagrams $D_1, D_2$ by identifying the
bottom row of vertices in $D_1$ with the top row of vertices in $D_2$. The result is also a
rook $n$-diagram and defined as the multiplication $D_1\cdot D_2$.

There is a connection between the above two presentations of rook monoids. For each integer
$1\leqslant i<n$, the standard generator $s_i$ corresponds to the rook $n$-diagram with edges connecting
the vertices $i$ (resp. $i+1$) on the top row with $(i+1)^-$ (resp. $i^-$) on the bottom row,
and all other edges being vertical, connecting the vertices $k$ and $k^-$ for all $k\neq i,i+1$.
For each integer $1\leqslant j\leqslant n$, the standard generator $p_j$ corresponds to the rook $n$-diagram
with isolated vertices $j$ and $j^-$, and vertical edges $\{k,k^-\}$ for all $k\neq j$.
For an integer $r$ with $0\leqslant r\leqslant n$, we define
$$
\mathcal{D}_r:=\{d\in\mathfrak{S}_n\ |\ (1)d<(2)d<\cdots<(r)d, (r+1)d<\cdots<(n)d\}.
$$
Note that $\mathcal{D}_0=\{1\}$ and $\mathcal{D}_r$ is the set of distinguished
right coset representatives of $\mathfrak{S}_{(r,n-r)}$ in $\mathfrak{S}_n$. It is helpful to point out
that we consider the elements of symmetric group as {\em right} permutations. From this point of view,
the composition of permutations coincides with the multiplication of rook diagrams.
Let $F$ be a field of characteristic $0$. The following proposition follows directly from
the diagrammatic multiplication of rook monoid algebras.

\begin{proposition}\label{xx2.1}
For each $D\in {\rm Rd}_n$ there exists a unique quadruple $(d_1,d_2,r,\sigma)$ with
$0\leqslant r\leqslant n$, $d_1,d_2\in\mathcal{D}_r$, $\sigma\in\mathfrak{S}_{\{r+1,r+2,\ldots,n\}}$
and such that $D=d_1^{-1}p_1p_2\cdots p_r\sigma d_2$. In particular, the elements in the following set
$$
\{d_1^{-1}p_1p_2\cdots p_r\sigma d_2\ |\ 0\leqslant r\leqslant n,\ d_1,d_2\in\mathcal{D}_r,
\ \sigma\in\mathfrak{S}_{\{r+1,r+2,\ldots,n\}}\}
$$
form a basis of the rook monoid algebra $FR_n$ and it coincides with the natural basis given by
rook $n$-diagram.
\end{proposition}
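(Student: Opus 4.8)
The plan is to read the quadruple $(d_1,d_2,r,\sigma)$ directly off the combinatorics of the diagram $D$, regarding each of $d_1^{-1}$, $p_1\cdots p_r\sigma$ and $d_2$ as a rook $n$-diagram and composing them by concatenation. First I would set $r$ to be the number of isolated vertices in the top row of $D$. Since the edges of $D$ set up a bijection between the non-isolated top vertices and the non-isolated bottom vertices, the bottom row has exactly the same number $r$ of isolated vertices; write $I^{\mathrm{top}},I^{\mathrm{bot}}$ for the two sets of isolated vertices, each of size $r$. The goal is then to show that $I^{\mathrm{top}}$, $I^{\mathrm{bot}}$ and the edge-bijection of $D$ force each of $d_1,d_2,\sigma$.

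The key computation is to track which vertices are isolated under the three multiplications. Recall that $p_1\cdots p_r$ is the diagram whose top and bottom isolated vertices are exactly $\{1,\dots,r\}$ and $\{1^-,\dots,r^-\}$, with vertical edges $\{k,k^-\}$ for $k>r$. Right multiplication by $\sigma\in\mathfrak{S}_{\{r+1,\dots,n\}}$, which fixes $1,\dots,r$, leaves the isolated sets unchanged and merely replaces the edge out of top vertex $k$ by one landing at $((k)\sigma)^-$. A further right multiplication by $d_2\in\mathcal{D}_r$ carries an isolated bottom vertex $j^-$ to $((j)d_2)^-$, so the isolated bottom set of $p_1\cdots p_r\sigma d_2$ becomes $\{(1)d_2,\dots,(r)d_2\}$; symmetrically, left multiplication by $d_1^{-1}$ makes the isolated top set equal to $\{(1)d_1,\dots,(r)d_1\}$.

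This is exactly what forces uniqueness. Because an element of $\mathcal{D}_r$ sends $1<\cdots<r$ to an increasing sequence, it is completely determined by the $r$-subset $\{(1)d,\dots,(r)d\}$ of its first-block values; hence the requirement that the isolated top and bottom sets of $d_1^{-1}p_1\cdots p_r\sigma d_2$ agree with $I^{\mathrm{top}}$ and $I^{\mathrm{bot}}$ pins down $d_1$ and $d_2$ uniquely inside $\mathcal{D}_r$. Tracing a single edge through the concatenation then shows that top vertex $i$ of $D$ is joined to $\bigl((i)d_1^{-1}\sigma d_2\bigr)^-$ whenever $(i)d_1^{-1}>r$, so once $d_1,d_2$ are known the relation $\sigma=d_1\,D\,d_2^{-1}$ determines $\sigma$ on $\{r+1,\dots,n\}$; conversely one checks that this formula does define an element of $\mathfrak{S}_{\{r+1,\dots,n\}}$ and that the resulting product reproduces $D$, which gives existence. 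The step I expect to demand the most care is precisely this bookkeeping of isolated vertices and edge-endpoints under concatenation, together with the verification that the recovered $\sigma$ is a genuine permutation of $\{r+1,\dots,n\}$ (and not merely a partial injection).

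Finally, for the basis statement I would observe that existence and uniqueness together exhibit a bijection between the set of admissible quadruples and $\mathrm{Rd}_n$, existence giving surjectivity and uniqueness injectivity; as a numerical check, $\sum_{r=0}^n{n\choose r}^2(n-r)!=\sum_{r=0}^n{n\choose r}^2 r!=|R_n|$ after the reindexing $r\mapsto n-r$. Since each product $d_1^{-1}p_1\cdots p_r\sigma d_2$ is by construction the single rook diagram $D$ and not a nontrivial linear combination of diagrams, the indexing set of these products coincides with the natural diagram basis of $FR_n$, and is therefore itself a basis.
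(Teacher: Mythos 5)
Your proposal is correct and follows essentially the same route as the paper, which simply asserts that the proposition ``follows directly from the diagrammatic multiplication of rook monoid algebras'' and records (in the remark after the statement) exactly the diagrammatic description you derive: isolated top vertices $(1)d_1,\ldots,(r)d_1$, isolated bottom vertices $((1)d_2)^-,\ldots,((r)d_2)^-$, and edges joining $(j)d_1$ to $((j)\sigma d_2)^-$. Your write-up just supplies the bookkeeping (recovering $r$, $d_1$, $d_2$, $\sigma$ from the isolated-vertex sets and the edge bijection, plus the counting check) that the paper leaves implicit.
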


Note that in the above proposition, the element $d_1^{-1}p_1p_2\cdots p_r\sigma d_2$ corresponds to
the rook $n$-diagram with the isolated vertices $(1)d_1,(2)d_1,\ldots,(r)d_1$ in the top row,
the isolated vertices $((1)d_2)^-,((2)d_2)^-,\ldots,((r)d_2)^-$ in the bottom row, and edges connecting
$(j)d_1$ with $((j)\sigma d_2)^-$ for $j=r+1,r+2,\ldots,n$.

As is predicated in the introduction, there is a left action of $FR_n$ on the $n$-tensor space
$U^{\otimes n}$ which commutes with the left action of ${\rm GL}(V)$. We now recall the definition
of this action. Let $\delta_{i,j}$ denote the value of the usual Kronecker delta. We fix a basis
$\{v_1,v_2,\ldots,v_m\}$ of $V$ and a basis $\{v_0\}$ of $F$ such that
$$
U^{\otimes n}=F-\Span\{v_{i_1}\otimes v_{i_2}\otimes\cdots\otimes v_{i_n}
\ |\ i_j=0,1,\ldots,m\}.
$$
The left action of $FR_n$ on $U^{\otimes n}$ is defined on generators by (see \cite{Solomon})
$$\begin{aligned}
s_j(v_{i_1}\otimes\cdots\otimes v_{i_n}) &:=v_{i_1}\otimes\cdots\otimes v_{i_{j-1}}\otimes
v_{i_{j+1}}\otimes v_{i_j}\otimes v_{i_{j+2}}\otimes \cdots\otimes v_{i_n},\\
p_j(v_{i_1}\otimes\cdots\otimes v_{i_n}) &:=\delta_{i_{j},0} v_{i_1}\otimes\cdots\otimes v_{i_{j-1}}\otimes
v_0\otimes v_{i_{j+1}}\otimes\cdots\otimes v_{i_n}.
\end{aligned}$$
\vspace{2pt}

\subsection{Specht module}\label{xxsec2.2}

Munn demonstrated the representations of rook monoids stemmed from his work on the general theory
of representations of finite semigroups \cite{Munn1,Munn2}. He computed the irreducible characters
of $R_n$ using irreducible characters of symmetric group $\mathfrak{S}_r$ ($1\leqslant r\leqslant n$) and showed that the
rook monoid algebra $FR_n$ is semisimple. Motivated by Munn's work and the theory of Specht modules
of symmetric groups, Grood \cite{Grood} studied the theory of Specht modules of $FR_n$ which we recall here. It should
be pointed out that although Grood worked on the complex field $\mathbb{C}$, it is clear that
all the results in \cite{Grood} also hold for an arbitrary field $F$ of characteristic $0$.

A partition of $r$ is a sequence of nonnegative integers $\lambda=(\lambda_1,\lambda_2,\cdots)$ with
$\lambda_1\geqslant \lambda_2\geqslant \cdots$ and $\sum_{i\geqslant 1}\lambda_i=r$. In this case,
we write $\lambda\vdash r$ and $|\lambda|=r$. The length of $\lambda$, denoted $\ell(\lambda)$, is the
maximum subscript $j$ such that $\lambda_j>0$. The Young diagram of $\lambda$ is defined to be the set
$$
[\lambda]:=\{(i,j)\ |\ 1\leqslant j\leqslant \lambda_i \}.
$$
A $\lambda$-tableau is a bijection $\mathfrak{t}:[\lambda]\rightarrow \{1,2,\ldots,r\}$.

\begin{definition}\label{xx2.2}
Let $\lambda\vdash r$ with $0\leqslant r\leqslant n$. An $n$-tableau of shape $\lambda$,
also called a $\lambda_r^n$-tableau, is a bijection $\mathfrak{t}:[\lambda]\rightarrow S$,
where $S$ is a subset of $r$ distinct elements of the set $\{1,2,\ldots,n\}$.
\end{definition}

Let $\mathfrak{t}$ be a $\lambda_r^n$-tableau. We write $\mathfrak{t}_{ij}=\mathfrak{t}(i,j)$,
the entry contained in the box of the $i$-th row and $j$-th column of $\mathfrak{t}$ by Grood's notations.
The {\em content} of $\mathfrak{t}$, denoted ${\rm cont}(\mathfrak{t})$, is the image of $\mathfrak{t}$.
For a given $\lambda_r^n$-tableau $\mathfrak{t}$, let $\mathcal{R}_i$ (resp. $\mathcal{C}_j$) be the
set of entries in the $i$-th row (resp. the $j$-th column) of $\mathfrak{t}$. Two $\lambda_r^n$-tableaux
$\mathfrak{t}$ and $\mathfrak{s}$ are called {\em row-equivalent} if the corresponding rows of
the two tableaux contain the same entries. In other words, the set $\mathcal{R}_i$ of $\mathfrak{t}$ coincides with
that of $\mathfrak{s}$ for $1\leqslant i\leqslant \ell(\lambda)$. In this case, we write
$\mathfrak{t}\sim \mathfrak{s}$.

\begin{definition}\label{xx2.3}
An $n$-tabloid $\{\mathfrak{t}\}$ of shape $\lambda$, also called a $\lambda_r^n$-tabloid $\{\mathfrak{t}\}$,
is the set of all $\lambda_r^n$-tableaux that are row-equivalent to $\mathfrak{t}$; i.e.,
$$
\{\mathfrak{t}\}=\{\mathfrak{s}\ |\ \mathfrak{s}\sim \mathfrak{t}\}.
$$
\end{definition}

Let $\lambda\vdash r$ with $0\leqslant r\leqslant n$, $N^{\lambda}$ the $F$-vector space generated by all
$\lambda_r^n$-tableau. We can define an action of $R_n$ on $N^{\lambda}$ by first determining how $R_n$
acts on the basis of $\lambda_r^n$-tableau and then linearly extending this action to the whole space.
If $\mathfrak{t}$ is a $\lambda_r^n$-tableau, for each $\pi\in R_n$ we define $\pi\mathfrak{t}$ to be the
zero vector if there exists $\mathfrak{t}_{ij}\in {\rm cont}(\mathfrak{t})$ such that $\pi(\mathfrak{t}_{ij})=0$;
otherwise, we say that $(\pi\mathfrak{t})_{ij}=\pi(\mathfrak{t}_{ij})$. Let us rephrase this action by the
language of rook $n$-diagram. Let $D=d_1^{-1}p_1p_2\cdots p_s\sigma d_2$ be a rook $n$-diagram as that in
Proposition \ref{xx2.1}. Then $D\mathfrak{t}=0$ if there exists an integer $i$ with $1\leqslant i\leqslant s$
such that $(i)d_2\in {\rm cont}(\mathfrak{t})$. Otherwise (in this case ${\rm cont}(\mathfrak{t})\subseteq \{(s+1)d_2,\cdots,(n)d_2\}$),
$D$ maps the entry which equal to $(j)d_2$ to $(j)\sigma^{-1}d_1$ for some $s+1\leqslant j\leqslant n$.

Let $M^{\lambda}$ be the $F$-vector space generated by all $\lambda_r^n$-tabloids. If two $\lambda_r^n$-tableaux
$\mathfrak{s}, \mathfrak{t}$ satisfies $\mathfrak{s}\sim \mathfrak{t}$, then $\pi\mathfrak{s}\sim \pi\mathfrak{t}$
for all $\pi\in R_n$. Therefore we have an induced action of $R_n$ on $M^{\lambda}$ given by
\begin{gather*}
\pi\{\mathfrak{t}\}=\left\{
\begin{tabular}
[c]{l}%
$0$\ \ \ \ \ \ \ \ if $\pi\mathfrak{t}=0$,\\
$\{\pi\mathfrak{t}\}$\ \ \ \ otherwise.%
\end{tabular}
\right.
\end{gather*}
For a $\lambda_r^n$-tableau $\mathfrak{t}$, let $C_{\mathfrak{t}}:=\mathfrak{S}_{\mathcal{C}_1}\times \mathfrak{S}_{\mathcal{C}_2}\times
\cdots\times \mathfrak{S}_{\mathcal{C}_l}$ be the column stabiliser subgroup of $\mathfrak{t}$ in
symmetric group $\mathfrak{S}_n$, where $l=\lambda_1$. Note that $C_{\mathfrak{t}}$
does not consist of all the elements in $R_n$ that fix the columns of $\mathfrak{t}$.
For each $\lambda_r^n$-tableau $\mathfrak{t}$ we define the following element in $M^{\lambda}$:
$$
e_{\mathfrak{t}}:=\sum_{\sigma\in C_{\mathfrak{t}}} \sgn(\sigma)\sigma\{\mathfrak{t}\},
$$
where $\sgn(\sigma)$ stands for the sign of the permutation $\sigma$. The element $e_{\mathfrak{t}}$
is called the {\em $n$-polytabloid} associated with $\mathfrak{t}$.

\begin{lemma}\label{xx2.4} {\rm (\cite[Proposition 3.3]{Grood})}
Suppose $\pi\in R_n$, $\mathfrak{t}$ is a $\lambda_r^n$-tableau. If $\pi\mathfrak{t}=0$,
then $\pi e_{\mathfrak{t}}=0$. Otherwise, $\pi e_{\mathfrak{t}}=e_{\widehat{\pi}\mathfrak{t}}$.
\end{lemma}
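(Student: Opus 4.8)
The plan is to follow the two cases in the statement, using the fact that each $\sigma\in C_{\mathfrak{t}}$ is a genuine permutation in $\mathfrak{S}_n$ that only rearranges entries within columns, and, in the non-vanishing case, to push the entire column-stabiliser sum through $\pi$ by means of a sign-preserving bijection between $C_{\mathfrak{t}}$ and $C_{\widehat{\pi}\mathfrak{t}}$.

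First I would dispose of the vanishing case $\pi\mathfrak{t}=0$. The point is that for every $\sigma\in C_{\mathfrak{t}}$ we have ${\rm cont}(\sigma\{\mathfrak{t}\})={\rm cont}(\{\mathfrak{t}\})$, since $\sigma$ merely permutes the entries inside the columns of $\mathfrak{t}$. Now $\pi\mathfrak{t}=0$ means $\pi$ sends some entry of ${\rm cont}(\mathfrak{t})$ to $0$, and that very entry still occurs in $\sigma\{\mathfrak{t}\}$; by the definition of the action on tabloids this forces $\pi\sigma\{\mathfrak{t}\}=0$ for every $\sigma\in C_{\mathfrak{t}}$. Summing over $C_{\mathfrak{t}}$ gives $\pi e_{\mathfrak{t}}=0$.

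For the non-vanishing case $\pi\mathfrak{t}\neq 0$, the restriction of $\pi$ to ${\rm cont}(\mathfrak{t})$ is an injection into $\{1,\ldots,n\}$ that annihilates no entry, hence a bijection onto ${\rm cont}(\widehat{\pi}\mathfrak{t})=\pi({\rm cont}(\mathfrak{t}))$, carrying the $j$-th column $\mathcal{C}_j$ of $\mathfrak{t}$ bijectively onto the $j$-th column of $\widehat{\pi}\mathfrak{t}$. I would exploit this to assign to each $\sigma\in C_{\mathfrak{t}}$ the permutation $\sigma'\in\mathfrak{S}_n$ agreeing with the conjugate $\pi\sigma\pi^{-1}$ on ${\rm cont}(\widehat{\pi}\mathfrak{t})$ and fixing everything else; then $\sigma\mapsto\sigma'$ is a bijection $C_{\mathfrak{t}}\to C_{\widehat{\pi}\mathfrak{t}}$, and since $\sigma$ and $\sigma'$ share the same cycle type, $\sgn(\sigma)=\sgn(\sigma')$. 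The heart of the computation is the entrywise identity $\pi\sigma\{\mathfrak{t}\}=\sigma'\{\widehat{\pi}\mathfrak{t}\}$: applying $\pi\sigma$ sends each entry $\mathfrak{t}_{ij}$ to $\pi(\sigma(\mathfrak{t}_{ij}))$, whereas applying $\sigma'$ after $\pi$ sends it to $\sigma'(\pi(\mathfrak{t}_{ij}))=\pi(\sigma(\mathfrak{t}_{ij}))$ by the conjugation relation, so the two tabloids coincide. Re-indexing then yields $\pi e_{\mathfrak{t}}=\sum_{\sigma\in C_{\mathfrak{t}}}\sgn(\sigma)\,\sigma'\{\widehat{\pi}\mathfrak{t}\}=\sum_{\tau\in C_{\widehat{\pi}\mathfrak{t}}}\sgn(\tau)\,\tau\{\widehat{\pi}\mathfrak{t}\}=e_{\widehat{\pi}\mathfrak{t}}$.

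The main obstacle is the bookkeeping forced by the partial nature of $\pi$: one must check that $\sigma'$ is genuinely well defined as an element of $\mathfrak{S}_n$ even though $\pi$ is only a partial permutation, and that the action on tabloids is compatible with the action on tableaux, using the well-definedness of the induced $R_n$-action recalled just before the lemma. Once the conjugation bijection $\sigma\mapsto\sigma'$ is correctly set up and seen to preserve signs, the remainder is the routine entrywise verification above.
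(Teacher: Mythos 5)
Your proposal is correct, but note that the paper does not prove this lemma at all: it is quoted verbatim from Grood \cite[Proposition 3.3]{Grood}, with even the definition of $\widehat{\pi}$ deferred to that reference. Your argument is, in substance, the standard (and Grood's original) one: the vanishing case follows because every $\sigma\in C_{\mathfrak{t}}$ preserves ${\rm cont}(\mathfrak{t})$, and the surviving case follows from a sign-preserving conjugation bijection $C_{\mathfrak{t}}\to C_{\widehat{\pi}\mathfrak{t}}$; indeed your $\sigma'$ is exactly $\widehat{\pi}\sigma\widehat{\pi}^{-1}$ for any extension $\widehat{\pi}\in\mathfrak{S}_n$ of $\pi|_{{\rm cont}(\mathfrak{t})}$, since $\sigma$ fixes everything outside ${\rm cont}(\mathfrak{t})$, so conjugating by the partial map $\pi$ and extending by the identity (as you do) agrees with conjugating by $\widehat{\pi}$ (as Grood does), and both proofs then finish by the same re-indexing of the alternating sum.
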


We refer the reader to \cite{Grood} for the definition of $\widehat{\pi}$ which will not be used
in our paper. For each partition $\lambda\vdash r$ with $0\leqslant r\leqslant n$, let us define
$$
R^{\lambda}:=F-\Span\{e_{\mathfrak{t}}\ |\ \mathfrak{t}\ \text{is a}\ \lambda_r^n\text{-tableau}\}.
$$
It is clear that the subspace $S^{\lambda}$ of $R^{\lambda}$ generated by those $n$-polytabloids
$e_{\mathfrak{t}}$ with ${\rm cont}(\mathfrak{t})=\{1,2,\ldots,r\}$ is exactly the so-called
Specht module for symmetric group $\mathfrak{S}_r$. Therefore Grood called $R^{\lambda}$ the
{\em Specht module} for rook monoid algebra $FR_n$. We conclude the main results of \cite{Grood}
as follows

\begin{theorem}\label{xx2.5}
With notations as above, $\{R^{\lambda}\ |\ \lambda\vdash r, 0\leqslant r\leqslant n\}$ forms
a complete set of pairwise non-isomorphic irreducible $FR_n$-modules.
\end{theorem}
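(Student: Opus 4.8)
The plan is to establish three assertions: (I) each $R^{\lambda}$ is an irreducible $FR_n$-module; (II) $R^{\lambda}\cong R^{\mu}$ forces $\lambda=\mu$; and (III) the family exhausts all irreducibles. I would take for granted Munn's theorem that $FR_n$ is split semisimple over a field of characteristic $0$, so that (III) reduces to a dimension count once (I) and (II) are in hand.

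For (I) I would adapt James's submodule machinery to the inclusion $R^{\lambda}\subseteq M^{\lambda}$. Equip $M^{\lambda}$ with the symmetric bilinear form $\langle\cdot,\cdot\rangle$ for which the $\lambda_r^n$-tabloids are orthonormal, and for a $\lambda_r^n$-tableau $\mathfrak{t}$ set $\kappa_{\mathfrak{t}}:=\sum_{\sigma\in C_{\mathfrak{t}}}\sgn(\sigma)\sigma\in F\mathfrak{S}_n\subseteq FR_n$, so that $\kappa_{\mathfrak{t}}\{\mathfrak{t}\}=e_{\mathfrak{t}}$. The key point is that, since $\kappa_{\mathfrak{t}}$ involves only permutations lying in $\mathfrak{S}_{{\rm cont}(\mathfrak{t})}$ (which never erase a vertex), James's lemma carries over: for any tabloid $\{\mathfrak{s}\}$ one has $\kappa_{\mathfrak{t}}\{\mathfrak{s}\}\in\{0,\pm e_{\mathfrak{t}}\}$, the nonzero case occurring only when ${\rm cont}(\mathfrak{s})={\rm cont}(\mathfrak{t})$. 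Extending linearly, $\kappa_{\mathfrak{t}}w=\pm\langle w,e_{\mathfrak{t}}\rangle\,e_{\mathfrak{t}}$ for all $w\in M^{\lambda}$. Now let $0\neq W\subseteq R^{\lambda}$ be a submodule. If some $w\in W$ has $\langle w,e_{\mathfrak{t}}\rangle\neq0$, then $e_{\mathfrak{t}}\in W$; since $\mathfrak{S}_n$ acts transitively on the $r$-subsets of $\{1,\dots,n\}$ and the ordinary Specht module $S^{\lambda}$ is irreducible under $\mathfrak{S}_r$, Lemma \ref{xx2.4} shows that the cyclic module $F\mathfrak{S}_n\,e_{\mathfrak{t}}$ already equals $R^{\lambda}$, whence $W=R^{\lambda}$. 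Otherwise $W\subseteq R^{\lambda}\cap(R^{\lambda})^{\perp}$; but tabloids of distinct content are orthogonal, giving an orthogonal decomposition of $R^{\lambda}$ into copies of $S^{\lambda}$ indexed by content, on each of which the form is the classical Specht form, non-degenerate in characteristic $0$. Hence $R^{\lambda}\cap(R^{\lambda})^{\perp}=0$ and $W=0$, a contradiction; so $R^{\lambda}$ is irreducible.

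For (II) I would first recover $r=|\lambda|$ as an invariant. Using the description of the action following Proposition \ref{xx2.1}, a rook diagram of rank $n-s$ annihilates every $\lambda_r^n$-tableau as soon as $n-s<r$; thus every diagram of rank $<r$ kills $R^{\lambda}$, while some diagram of rank $r$ preserving the content $\{1,\dots,r\}$ acts nontrivially. Therefore $r$ is the least rank of a diagram acting nonzero on the module, an isomorphism invariant, which separates $R^{\lambda}$ from $R^{\mu}$ whenever $|\lambda|\neq|\mu|$. For $|\lambda|=|\mu|=r$ I would localise at the idempotent $\varepsilon:=p_{r+1}p_{r+2}\cdots p_n$: the space $\varepsilon R^{\lambda}$ is spanned by the polytabloids of content $\{1,\dots,r\}$, i.e.\ it is the ordinary Specht module $S^{\lambda}$, on which $\mathfrak{S}_{\{1,\dots,r\}}\cong\mathfrak{S}_r$ acts in the usual way. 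Since $S^{\lambda}\not\cong S^{\mu}$ as $\mathfrak{S}_r$-modules for $\lambda\neq\mu$, the modules $R^{\lambda}$ and $R^{\mu}$ cannot be isomorphic.

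Finally, for (III), the orthogonal content decomposition above also gives $\dim_F R^{\lambda}=\binom{n}{r}f^{\lambda}$, where $f^{\lambda}=\dim S^{\lambda}$ is the number of standard Young tableaux of shape $\lambda$. Summing over all $\lambda$ and using $\sum_{\lambda\vdash r}(f^{\lambda})^2=r!$,
\[
\sum_{r=0}^{n}\sum_{\lambda\vdash r}(\dim R^{\lambda})^2=\sum_{r=0}^{n}\binom{n}{r}^2\sum_{\lambda\vdash r}(f^{\lambda})^2=\sum_{r=0}^{n}\binom{n}{r}^2 r!=\dim_F FR_n.
\]
As $FR_n$ is split semisimple in characteristic $0$, the sum of the squared dimensions over a complete set of pairwise non-isomorphic irreducibles equals $\dim_F FR_n$; since by (I) and (II) the $R^{\lambda}$ are irreducible, pairwise non-isomorphic, and already account for this total, they form a complete set. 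I expect the main obstacle to be (I): one must check carefully that James's key lemma and the non-degeneracy of the bilinear form survive the passage from $\mathfrak{S}_n$ to $R_n$, i.e.\ that the erasing generators $p_j$ do not interfere with the column-symmetriser argument—precisely the point where the content-preserving nature of the relevant permutations must be exploited.
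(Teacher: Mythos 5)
Your overall architecture---irreducibility via a James-type submodule argument, separation of the $R^{\lambda}$ via the minimal-rank invariant plus localisation at $\varepsilon=p_{r+1}\cdots p_n$, and completeness via semisimplicity and the count $\sum_{r}\binom{n}{r}^2r!=\dim_F FR_n$---is the right one; note that the paper itself gives no proof of Theorem \ref{xx2.5} (it is quoted from \cite{Grood}), and your outline parallels Grood's argument. However, step (I) contains a genuine error, and it sits exactly at the point you yourself flagged as delicate. The claimed key lemma, that $\kappa_{\mathfrak{t}}\{\mathfrak{s}\}\in\{0,\pm e_{\mathfrak{t}}\}$ for \emph{every} tabloid $\{\mathfrak{s}\}$, with the nonzero case forcing ${\rm cont}(\mathfrak{s})={\rm cont}(\mathfrak{t})$, is false---and it fails precisely \emph{because} $C_{\mathfrak{t}}\subseteq\mathfrak{S}_{{\rm cont}(\mathfrak{t})}$: permutations that move only the entries of ${\rm cont}(\mathfrak{t})$ cannot collapse or annihilate tabloids of a different content. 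Concretely, take $n=3$, $\lambda=(1,1)$, $\mathfrak{t}$ the column tableau with entries $1,2$ and $\mathfrak{s}$ the column tableau with entries $1,3$. Then $\kappa_{\mathfrak{t}}=1-(1\,2)$ and $\kappa_{\mathfrak{t}}\{\mathfrak{s}\}=\{\mathfrak{s}\}-\{(1\,2)\mathfrak{s}\}\neq 0$ is a combination of tabloids of contents $\{1,3\}$ and $\{2,3\}$, not a multiple of $e_{\mathfrak{t}}$. Consequently your identity $\kappa_{\mathfrak{t}}w=\pm\langle w,e_{\mathfrak{t}}\rangle e_{\mathfrak{t}}$ fails (take $w=\{\mathfrak{s}\}$: the right-hand side vanishes, the left does not), and the inference ``$\langle w,e_{\mathfrak{t}}\rangle\neq0$ for some $w\in W$ implies $e_{\mathfrak{t}}\in W$'' collapses.

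The repair is short, but it is exactly where the rook-monoid structure must be used. Since $W$ is an $FR_n$-submodule, not merely an $F\mathfrak{S}_n$-submodule, you may first apply the idempotent $P_{\mathfrak{t}}:=\prod_{i\notin{\rm cont}(\mathfrak{t})}p_i\in FR_n$. Because $p_i\{\mathfrak{s}\}=0$ whenever $i\in{\rm cont}(\mathfrak{s})$, and all contents in $M^{\lambda}$ have the same cardinality $r$, the element $P_{\mathfrak{t}}$ acts on $M^{\lambda}$ as the orthogonal projection onto the span of tabloids of content exactly ${\rm cont}(\mathfrak{t})$. For $w'=P_{\mathfrak{t}}w\in W$ the classical James lemma, applied inside $\mathfrak{S}_{{\rm cont}(\mathfrak{t})}$, does give $\kappa_{\mathfrak{t}}w'=\langle w,e_{\mathfrak{t}}\rangle e_{\mathfrak{t}}$, and the rest of your argument (transitivity of $\mathfrak{S}_n$ on contents, non-degeneracy of the form in characteristic $0$ blockwise) runs unchanged. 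This composite $\kappa_{\mathfrak{t}}P_{\mathfrak{t}}$ is the same device the paper builds into Definition \ref{xx3.9}, and it is how Grood's proof proceeds. One smaller caveat: your completeness count in (III) requires \emph{split} semisimplicity, which is more than Munn's semisimplicity theorem literally gives over an arbitrary field of characteristic $0$; the paper gets $\End_{FR_n}(R^{\lambda})\cong F$ from East's cellularity result together with Graham--Lehrer (see the proof of Theorem \ref{xx3.12}), so you should either invoke that or first work over $\mathbb{C}$ and descend.
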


\section{Blocks of rook monoid algebra}\label{xxsec3}

In this section, we will decompose the rook monoid algebra $FR_n$ as the direct sum of blocks,
the indecomposible two-sided ideals, by constructing elements in $FR_n$ analogous to the Young
symmetrizers or anti-symmetrizers of symmetric groups.

\begin{lemma}\label{xx3.1}
Let $\rho_1,\rho_2,\rho_3$ be the three one-dimensional representations of $FR_n$ which are
defined on generators by
$$
\rho_1(s_i)=1,\hspace{6pt} \rho_1(p_j)=0;
$$
$$
\rho_2(s_i)=-1,\hspace{6pt} \rho_2(p_j)=0;
$$
$$
\rho_3(s_i)=1,\hspace{6pt} \rho_3(p_j)=1,
$$
where $1\leqslant i\leqslant n-1$ and $1\leqslant j\leqslant n$. For $n\geqslant 2$, then up
to isomorphism, $\rho_1,\rho_2,\rho_3$ are the only three non-isomorphic one-dimensional representations of $FR_n$.
\end{lemma}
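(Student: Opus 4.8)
The plan is to use the fact that a one-dimensional representation of $FR_n$ is the same thing as a monoid homomorphism $\rho\colon R_n\to (F,\cdot)$ into the multiplicative monoid of $F$, and is therefore completely determined by the scalars $a_i:=\rho(s_i)$ and $b_j:=\rho(p_j)$, subject \emph{only} to the conditions obtained by applying $\rho$ to each of the eight families of defining relations in the presentation of $R_n$. Since $F$ is commutative, the three relations $s_is_j=s_js_i$, $p_ip_j=p_jp_i$ and $s_ip_j=p_js_i$ (for $|i-j|>1$) are automatically satisfied and impose no constraint; the content lies in the other five.

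First I would extract the constraints relation by relation. From $s_i^2=1$ we get $a_i^2=1$, so each $a_i\in\{1,-1\}$; from $p_j^2=p_j$ we get $b_j^2=b_j$, so each $b_j\in\{0,1\}$. Applying $\rho$ to $s_ip_i=p_{i+1}s_i$ gives $a_ib_i=b_{i+1}a_i$, and since $a_i\neq 0$ this forces $b_i=b_{i+1}$; hence all the $b_j$ coincide with a single value $b\in\{0,1\}$. Applying $\rho$ to the braid relation $s_is_{i+1}s_i=s_{i+1}s_is_{i+1}$ and cancelling using $a_i^2=a_{i+1}^2=1$ yields $a_i=a_{i+1}$, so all the $a_i$ coincide with a single value $a\in\{1,-1\}$ (for $n=2$ there is only the single generator $s_1$, so this step is vacuous and nothing is lost).

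The decisive constraint is the last relation $p_is_ip_i=p_ip_{i+1}$, which under $\rho$ reads $b\,a\,b=b^2$, i.e.\ $ab^2=b^2$; using $b^2=b$ this simplifies to $ab=b$, that is $b(a-1)=0$. Thus either $b=0$, in which case $a\in\{1,-1\}$ is free, or $a=1$, in which case the only genuinely new possibility is $b=1$. This leaves exactly the three tuples $(a,b)\in\{(1,0),(-1,0),(1,1)\}$, which are precisely $\rho_1,\rho_2,\rho_3$, while the fourth conceivable tuple $(-1,1)$ is excluded. A short back-substitution confirms that each of the three surviving tuples does satisfy all eight relation families, so each really does define a representation. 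Finally, one-dimensional representations are isomorphic exactly when they agree as characters, and these three take different values on the generators, so they are pairwise non-isomorphic.

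I expect no serious obstacle: the argument is a direct bookkeeping of the defining relations. The only point that deserves a little care is the role of the hypothesis $n\geqslant 2$, which guarantees that $p_1$ coexists with $s_1$ and $p_2$, so that the relation $p_1s_1p_1=p_1p_2$ is available to produce the constraint $b(a-1)=0$; when $n=1$ there are no generators $s_i$ at all and only two one-dimensional representations survive, so the hypothesis cannot be dropped.
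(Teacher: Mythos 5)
Your proof is correct and follows essentially the same route as the paper: both arguments extract the constraints that the defining relations of the presentation impose on the scalars $\rho(s_i)$, $\rho(p_j)$, using exactly the same key relations ($s_i^2=1$, $p_j^2=p_j$, the braid relation, $s_ip_i=p_{i+1}s_i$, and $p_is_ip_i=p_ip_{i+1}$) to force all the $\rho(s_i)$ equal, all the $\rho(p_j)$ equal, and to exclude the tuple $(-1,1)$. The only difference is organizational --- the paper splits into three cases according to the values of $\rho(s_1)$ and $\rho(p_1)$, whereas you solve the constraint equations uniformly --- and your closing remark on why $n\geqslant 2$ is needed is a nice touch not made explicit in the paper.
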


\begin{proof}
Using the generators and relations for $FR_n$, one checks easily that $\rho_1,\rho_2,\rho_3$ are
three one-dimensional representations of $FR_n$.

Suppose that $Fv$ affords a one-dimensional representation $\rho$ of $FR_n$. Since $s_1^2=1$ and
$p_1^2=p_1$, there are only three possibilities:

\textbf{Case 1.} $p_1v=0$ and $s_1v=v$. Using the relations $s_is_{i+1}s_i=s_{i+1}s_is_{i+1}$
and $s_ip_i=p_{i+1}s_i$, we deduce that $s_iv=v$ and $p_jv=0$ for all $1\leqslant i\leqslant n-1$
and $1\leqslant j\leqslant n$. Hence $\rho=\rho_1$ in this case.

\textbf{Case 2.} $p_1v=0$ and $s_1v=-v$. Using the relations $s_is_{i+1}s_i=s_{i+1}s_is_{i+1}$
and $s_ip_i=p_{i+1}s_i$, we deduce that $s_iv=-v$ and $p_jv=0$ for all $1\leqslant i\leqslant n-1$
and $1\leqslant j\leqslant n$. Hence $\rho=\rho_2$ in this case.

\textbf{Case 3.} $p_1v=v$. Using the relation $s_ip_i=p_{i+1}s_i$ we get that $p_jv=v$ for all
$1\leqslant j\leqslant n$. Then using the relation $p_is_ip_i=p_ip_{i+1}$, we deduce that
$s_iv=v$ for all $1\leqslant i\leqslant n-1$. Hence $\rho=\rho_3$ in this case.
This completes the proof of the lemma.
\end{proof}

It is clear that the two-sided ideal $\langle p_1p_2\cdots p_n\rangle$ generalized by $p_1p_2\cdots p_n$ corresponds to the
one-dimensional representation $\rho_3$. Since $FR_n$ is semisimple with base field $F$
of characteristic $0$ \cite{Munn2}, there are two one-dimensional two-sided ideals
corresponding to the two non-isomorphic one-dimensional representations $\rho_1,\rho_2$.
Our first task is to construct these two one-dimensional two-sided ideals in an explicit way.
Let $w\in \mathfrak{S}_n$. We denote $\ell(w)$ the length function of $w$, i.e., the length
of any reduced expression of $w$.

\begin{definition}\label{xx3.2}
Let $D\in {\rm Rd}_n$ and $(d_1,d_2,r,\sigma)$ the unique quadruple with
$0\leqslant r\leqslant n$, $d_1,d_2\in\mathcal{D}_r$, $\sigma\in\mathfrak{S}_{\{r+1,r+2,\ldots,n\}}$
and such that $D=d_1^{-1}p_1p_2\cdots p_r\sigma d_2$. We define
$\ell(D):=\ell(d_1)+\ell(\sigma)+\ell(d_2)$ and $\sgn(D):=(-1)^{r+\ell(D)}$.
\end{definition}

When $r=0$, the definition $\ell(D)$ coincides with the length function on $\mathfrak{S}_n$.
For any $D_1, D_2\in {\rm Rd}_n$, note that in general
$$
\sgn(D_1D_2)\neq \sgn(D_1)\sgn(D_2).
$$
For each integer $0\leqslant r\leqslant n$, we use ${\rm Rd}_n[r]$ to denote the set of
rook $n$-diagrams which have exactly $r$ isolated vertices in each row. Let $R_n^{(r)}$
be the two-sided ideal of $FR_n$ generated by $p_1p_2\cdots p_r$. Then there is a filtration
$$
FR_n=R_n^{(0)}\supset R_n^{(1)}\supset R_n^{(2)}\supset\cdots \supset R_n^{(n)}\supset 0
$$
of two-sided ideals of $FR_n$. We have from the multiplication of rook $n$-diagrams that
the ideal $R_n^{(r)}$ has a basis ${\rm Rd}_n[r]\cup {\rm Rd}_n[r+1]\cup\cdots \cup{\rm Rd}_n[n]$.
For $i=1,2$, let $X_i\in FR_n$ be an element such that the two-sided ideal $\langle X_i\rangle$
of $FR_n$ generated by $X_i$ corresponds to the one-dimensional representation $\rho_i$.

\begin{lemma}\label{xx3.3}
The elements $X_1$, $X_2$ can be taken of the following form:
$$
X_1=\sum_{\sigma\in\mathfrak{S}_n}\sigma +\sum_{r=1}^{n}\sum_{D\in {\rm Rd}_n[r]}C_D D,
$$
$$
X_2=\sum_{\sigma\in\mathfrak{S}_n}\sgn(\sigma)\sigma +\sum_{r=1}^{n}\sum_{D\in {\rm Rd}_n[r]}C'_D\sgn(D) D,
$$
where $C_D, C'_D\in F$.
\end{lemma}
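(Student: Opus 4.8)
The plan is to exploit the defining property of the $X_i$: since $\langle X_i\rangle$ is a one-dimensional two-sided ideal affording $\rho_i$, the whole algebra acts on $X_i$ by the scalar $\rho_i$, so in particular $s_kX_i=\rho_i(s_k)X_i$ for every $k$. I will read off the coefficient of each permutation in $X_i$ by working against the rank filtration $FR_n=R_n^{(0)}\supset R_n^{(1)}\supset\cdots$, whose $r$-th term has basis $\bigcup_{t\geq r}{\rm Rd}_n[t]$. Write $X_1=X_1^{(0)}+Y$, where $X_1^{(0)}\in F\mathfrak{S}_n$ is the part supported on ${\rm Rd}_n[0]=\mathfrak{S}_n$ and $Y\in R_n^{(1)}$ collects all the lower-rank diagrams. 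It then suffices to show that, after rescaling, $X_1^{(0)}=\sum_{\sigma\in\mathfrak{S}_n}\sigma$ and $X_2^{(0)}=\sum_{\sigma\in\mathfrak{S}_n}\sgn(\sigma)\sigma$; the surviving terms $Y$ supply the asserted coefficients $C_D$ (respectively $C'_D\sgn(D)$, the harmless factors $\sgn(D)=\pm1$ being absorbed into free constants).

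First I would pin down the shape of $X_1^{(0)}$. Left multiplication by $s_k$ sends a rank-$n$ diagram to a rank-$n$ diagram and preserves the ideal $R_n^{(1)}$, so extracting the ${\rm Rd}_n[0]$-component of the identity $s_kX_1=X_1$ yields $s_kX_1^{(0)}=X_1^{(0)}$ inside $F\mathfrak{S}_n$. Since the $s_k$ generate $\mathfrak{S}_n$, the element $X_1^{(0)}$ is left-invariant under all of $\mathfrak{S}_n$, which forces $X_1^{(0)}=c\sum_{\sigma}\sigma$ for some $c\in F$. The identical computation applied to $s_kX_2=-X_2$ gives $\tau X_2^{(0)}=\sgn(\tau)X_2^{(0)}$ for every $\tau\in\mathfrak{S}_n$, whence $X_2^{(0)}=c'\sum_\sigma\sgn(\sigma)\sigma$.

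The crux is to prove $c\neq0$ and $c'\neq0$, i.e. that the symmetric-group part of the generator does not vanish. Here I would use semisimplicity: the one-dimensional two-sided ideal $\langle X_1\rangle$ of $FR_n$ is spanned by its central primitive idempotent, so $X_1$ is a nonzero scalar multiple of it and $\rho_1(X_1)\neq0$. On the other hand $R_n^{(1)}=\langle p_1\rangle$ and $\rho_1(p_1)=0$, so the algebra homomorphism $\rho_1$ annihilates $R_n^{(1)}$; consequently $X_1\notin R_n^{(1)}$, which is precisely the statement $X_1^{(0)}\neq0$, i.e. $c\neq0$. The same argument with $\rho_2(p_1)=0$ gives $c'\neq0$. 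Rescaling $X_1$ by $c^{-1}$ and $X_2$ by $c'^{-1}$ (which still generate the same ideals) then yields elements of the stated form; note that for $\sigma\in\mathfrak{S}_n$ one has $r=0$, so $\sgn(\sigma)=(-1)^{\ell(\sigma)}$ coincides with the diagram sign $\sgn(\sigma)$ of Definition \ref{xx3.2}, making the $X_2$ expression internally consistent. The only genuine difficulty is this nonvanishing of the leading term; once it is established, everything else is bookkeeping against the rank filtration, and the precise values of the lower-rank coefficients $C_D,C'_D$ need not be computed, since the lemma asserts only their existence.
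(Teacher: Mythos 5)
Your proof is correct and takes essentially the same route as the paper: extracting the ${\rm Rd}_n[0]$-component of the relations $s_kX_i=\rho_i(s_k)X_i$ is exactly working through the paper's quotient map $\theta: FR_n\twoheadrightarrow FR_n/R_n^{(1)}\cong F\mathfrak{S}_n$, after which the image of $X_i$ is identified with the Young symmetrizer (resp.\ anti-symmetrizer) and the lower-rank terms are absorbed into the free coefficients $C_D$, $C'_D\sgn(D)$. The only difference is one of explicitness: you spell out what the paper cites as well known, namely the left-invariance argument forcing $X_1^{(0)}=c\sum_\sigma\sigma$ and $X_2^{(0)}=c'\sum_\sigma\sgn(\sigma)\sigma$, and the nonvanishing of the leading term via $\rho_i(R_n^{(1)})=0$ together with $\rho_i(X_i)\neq 0$, which the paper compresses into its appeal to semisimplicity and Lemma \ref{xx3.1}.
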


\begin{proof}
It is clear that there is an algebra epimorphism
$$
\theta:\ FR_n\twoheadrightarrow FR_n/R_n^{(1)}\cong F\mathfrak{S}_n.
$$
Since the algebras $FR_n$ and $F\mathfrak{S}_n$ are both semisimple, it follows from Lemma \ref{xx3.1}
that the image of the two one-dimensional two-sided ideals corresponding to representations $\rho_1, \rho_2$
under $\theta$ must be the only two non-isomorphic one-dimensional two-sided ideals of $F\mathfrak{S}_n$.
The lemma now follows from the well-known results about Young symmetrizers or anti-symmetrizers of symmetric groups.
\end{proof}

For an arbitrary element $a\in FR_n$, we say that the rook $n$-diagram $D$ is involved in $a$, if
$D$ appears with nonzero coefficient when writing $a$ as a linear combination of the basis of
rook $n$-diagrams. The following lemma is well-known for symmetric groups.

\begin{lemma}\label{xx3.4}
Let $r$ be an integer with $0\leqslant r\leqslant n$. There exists a unique element $w_0\in \mathcal{D}_r$
of maximal length $r(n-r)$. If $s_{i_1}s_{i_2}\cdots s_{i_{r(n-r)}}$ is a reduced expression
of $w_0$, then for any integer $0\leqslant j\leqslant r(n-r)$, there is $s_{i_1}s_{i_2}\cdots s_{i_j}
\in\mathcal{D}_r$. Conversely, for any $d\in \mathcal{D}_r$, there exists a reduced expression
$s_{i_1}s_{i_2}\cdots s_{i_{r(n-r)}}$ of $w_0$ such that $d=s_{i_1}s_{i_2}\cdots s_{i_j}$ for some
$0\leqslant j\leqslant r(n-r)$.
\end{lemma}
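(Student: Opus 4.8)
The plan is to reinterpret $\mathcal{D}_r$ Coxeter-theoretically and then recognize it as an interval in the weak order. Recall that $\mathfrak{S}_{(r,n-r)}$ is the parabolic subgroup generated by $J:=\{s_1,\ldots,s_{n-1}\}\setminus\{s_r\}$, and $\mathcal{D}_r$ is its set of minimal-length right coset representatives. First I would record the elementary translation into one-line notation: right multiplication $d\mapsto ds_i$ transposes the values $i$ and $i+1$ in the sequence $((1)d,\ldots,(n)d)$, while left multiplication $d\mapsto s_k d$ transposes the entries in positions $k$ and $k+1$; hence $\ell(s_kd)>\ell(d)\iff (k)d<(k+1)d$, so $d\in\mathcal{D}_r$ if and only if $\ell(s_kd)>\ell(d)$ for every $s_k\in J$, i.e. $d$ has no left descent in $J$. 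I would then identify $w_0$ explicitly as the permutation sending $\{1,\ldots,r\}$ increasingly onto $\{n-r+1,\ldots,n\}$ and $\{r+1,\ldots,n\}$ increasingly onto $\{1,\ldots,n-r\}$. Since $d\in\mathcal{D}_r$ is increasing on each block, every inversion of $d$ is a ``cross pair'' $p\leqslant r<q$, so $\ell(d)\leqslant r(n-r)$, with equality exactly when all cross pairs are inverted; this forces the value set of the first block to be $\{n-r+1,\ldots,n\}$ and hence $d=w_0$, giving both the maximal length $r(n-r)$ and the uniqueness of $w_0$.

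For the first (forward) assertion I would argue by descent propagation in the right weak order. If $s_{i_1}\cdots s_{i_{r(n-r)}}$ is reduced and $d_j:=s_{i_1}\cdots s_{i_j}$, then $w_0=d_j\,(s_{i_{j+1}}\cdots s_{i_{r(n-r)}})$ with lengths adding. Were $d_j$ to have a left descent $s_k\in J$, then $s_kw_0=(s_kd_j)(s_{i_{j+1}}\cdots s_{i_{r(n-r)}})$ would satisfy $\ell(s_kw_0)\leqslant(\ell(d_j)-1)+(\ell(w_0)-\ell(d_j))<\ell(w_0)$, so $w_0$ would have the left descent $s_k\in J$, contradicting $w_0\in\mathcal{D}_r$. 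Hence every prefix $d_j$ lies in $\mathcal{D}_r$.

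The converse is where the real work lies, and it is the step I expect to be the main obstacle. Given $d\in\mathcal{D}_r$ with $d\neq w_0$, I would produce a single length-raising step that stays inside $\mathcal{D}_r$. Examining $ds_i$: swapping the values $i,i+1$ when they occupy positions in the same block destroys the increasing condition, whereas if $i$ lies in the first block and $i+1$ in the second the swap creates exactly one new cross-inversion and, because $i$ and $i+1$ are consecutive, keeps both blocks increasing; thus $ds_i\in\mathcal{D}_r$ and $\ell(ds_i)=\ell(d)+1$. Such an $i$ exists whenever the value set $B_1$ of the first block is not upward closed in $\{1,\ldots,n\}$; but an upward-closed set of size $r$ must equal $\{n-r+1,\ldots,n\}$, i.e. $d=w_0$. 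Inducting downward on $r(n-r)-\ell(d)$ then yields a saturated chain $d<ds_i<\cdots<w_0$ entirely within $\mathcal{D}_r$, and concatenating a reduced word for $d$ with the generators read off this chain exhibits $d$ as a prefix $s_{i_1}\cdots s_{i_j}$ of a reduced expression for $w_0$.

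The crux is thus the combinatorial move of the last paragraph --- that a proper element of $\mathcal{D}_r$ always admits a length-raising right multiplication preserving $\mathcal{D}_r$ --- which is precisely the statement that $\mathcal{D}_r$ is the full interval below $w_0$ in the right weak order; by comparison the forward direction and the identification of $w_0$ are routine. Alternatively one could shortcut both directions by invoking the standard fact (see Bj\"orner--Brenti) that the set of prefixes of all reduced words for a fixed element coincides with the principal order ideal it generates in the weak order, once $\mathcal{D}_r$ has been matched with that ideal.
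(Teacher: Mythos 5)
Your proof is correct. There is nothing in the paper to compare it against: the author states this lemma as ``well-known for symmetric groups'' and gives no proof whatsoever, so your write-up supplies exactly what the paper omits. All three of your steps check out under the paper's conventions (right permutations, $(k)d$ notation): the descent characterization $d\in\mathcal{D}_r \iff d$ has no left descent in $J=\{s_1,\ldots,s_{n-1}\}\setminus\{s_r\}$ is right, since left multiplication by $s_k$ swaps positions $k,k+1$; the identification and uniqueness of $w_0$ via counting cross-inversions is right; and the forward direction is the standard length-additivity argument showing a descent of a prefix would propagate to a descent of $w_0$. The converse, which you correctly flag as the real content, works: if the value set of the first block is not upward closed, pick $i$ in the first block with $i+1$ in the second, and right multiplication by $s_i$ creates exactly one new inversion (the cross pair involving $i,i+1$) while keeping both blocks increasing --- the consecutiveness of $i$ and $i+1$ is precisely what prevents any other comparison from changing. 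Iterating up to $w_0$ and concatenating reduced words then exhibits $d$ as a prefix. Your closing remark is also accurate: the whole lemma is the statement that $\mathcal{D}_r$ is the principal order ideal below $w_0$ in right weak order, and one could instead cite this from Bj\"orner--Brenti; the author of the paper chose, in effect, the citation route, while you chose the self-contained one.
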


\begin{lemma}\label{xx3.5}
For each integer $1\leqslant r\leqslant n$, and any $D_1, D_2\in {\rm Rd}_n[r]$, we have
$$
C_{D_1}=C_{D_2}\quad \text{and} \quad C'_{D_1}=C'_{D_2}.
$$
\end{lemma}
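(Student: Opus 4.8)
```latex
The plan is to show that the coefficients $C_D$ (respectively $C'_D$) are forced to be
constant on each $\mathrm{Rd}_n[r]$ by exploiting the defining property of $X_1$ (respectively
$X_2$): namely that $X_1$ generates a one-dimensional two-sided ideal affording $\rho_1$,
so that $s_i X_1 = X_1 = X_1 s_i$ and $p_j X_1 = 0 = X_1 p_j$ for all $i,j$. The idea is that
left and right multiplication by the $s_i$ permutes the rook diagrams within a fixed
$\mathrm{Rd}_n[r]$ in a way that lets us transport the coefficient of one diagram to that of
another, and invariance of $X_1$ under these multiplications then equates the two coefficients.

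First I would fix $r$ and recall from Definition \ref{xx3.2} and Proposition \ref{xx2.1}
that each $D\in\mathrm{Rd}_n[r]$ is written uniquely as $D=d_1^{-1}p_1\cdots p_r\sigma d_2$ with
$d_1,d_2\in\mathcal D_r$ and $\sigma\in\mathfrak S_{\{r+1,\ldots,n\}}$. The diagram $D$ is
determined by the triple $(d_1,\sigma,d_2)$, so moving between diagrams in $\mathrm{Rd}_n[r]$
amounts to changing these three pieces one elementary step at a time. The key structural input
is Lemma \ref{xx3.4}: any $d\in\mathcal D_r$ is reached from the identity by a sequence of
right multiplications by simple reflections $s_i$, each step increasing the length by one and
staying inside $\mathcal D_r$. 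Thus it suffices to show that $C_D$ is unchanged when we alter
$D$ by a single such simple reflection (acting on $d_1$, on $d_2$, or on $\sigma$), since any two
elements of $\mathrm{Rd}_n[r]$ are connected by a chain of such elementary moves.

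Next I would carry out the comparison step concretely. Using $X_1 = X_1 s_i$ and examining
the coefficient of a fixed basis diagram, multiplication on the right by $s_i$ sends the diagram
with data $(d_1,\sigma,d_2)$ to the one with data $(d_1,\sigma,d_2 s_i)$ whenever this stays a
distinguished coset representative of longer length, and this pairing is a bijection on the
relevant diagrams. Equating coefficients of a single diagram on both sides of $X_1 = X_1 s_i$
then yields $C_D = C_{D'}$ for the two diagrams related by this move; the analogous use of
$s_i X_1 = X_1$ handles changes in $d_1$, and changes in the $\sigma$-part are handled by
multiplication by $s_i$ with $r+1\le i\le n-1$. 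For $X_2$ the same argument applies, with the
twist $\sgn(\sigma)$ and $\sgn(D)$ absorbing the sign that $\rho_2(s_i)=-1$ introduces; here I
would check that the extra factor $(-1)$ from $\rho_2$ is exactly cancelled by the change in
$\sgn(D)=(-1)^{r+\ell(D)}$ when $\ell$ changes by one, so that the renormalized coefficients
$C'_D\sgn(D)$ transform in the correct way.

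The main obstacle I anticipate is bookkeeping the action of right/left multiplication by $s_i$
at the level of diagrams: one must verify that multiplying $D=d_1^{-1}p_1\cdots p_r\sigma d_2$
by $s_i$ does not lower the rank (i.e.\ stays in $\mathrm{Rd}_n[r]$ rather than dropping into
$R_n^{(r+1)}$) and that the resulting diagram's canonical quadruple is the expected one, so that
the coefficient comparison is clean and involves no collapsing of distinct diagrams. This
reduces to the standard but careful analysis of when $d_2 s_i \in \mathcal D_r$ with
$\ell(d_2 s_i) = \ell(d_2)+1$, which is precisely controlled by Lemma \ref{xx3.4}, and to the
relations $s_i p_i = p_{i+1}s_i$ and $p_i s_i p_i = p_i p_{i+1}$ governing how simple reflections
interact with the idempotent part. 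Once the invariance under each elementary move is established,
transitivity of the chain of moves immediately gives $C_{D_1}=C_{D_2}$ and
$C'_{D_1}=C'_{D_2}$ for all $D_1,D_2\in\mathrm{Rd}_n[r]$.
```
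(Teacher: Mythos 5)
Your proof is correct, and its engine is the same as the paper's: compare coefficients across an identity such as $s_iX_1=X_1$, using that multiplication by the invertible element $s_i$ permutes each $\mathrm{Rd}_n[f]$, and chain elementary moves via Lemma \ref{xx3.4}; your remark that for $X_2$ the factor $\rho_2(s_i)=-1$ is exactly cancelled by the parity change of $\ell(D)$ in $\sgn(D)=(-1)^{r+\ell(D)}$ is precisely the point the paper dismisses with ``can be proved similarly.'' The one genuine difference is the treatment of $d_2$. The paper never invokes right multiplication: it proves independence of $d_1$ from $s_iX_1=X_1$, and then transports this to $d_2$ via the anti-automorphism $\ast$ (with $s_i^{\ast}=s_i$, $p_j^{\ast}=p_j$), using $X_1^{\ast}=X_1$ --- a fact that itself leans on the normalization in Lemma \ref{xx3.3} that the $\mathfrak{S}_n$-part of $X_1$ has all coefficients $1$, which pins the scalar in $X_1^{\ast}=cX_1$ to $c=1$. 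You instead use $X_1s_i=X_1$ directly, which treats $d_1$ and $d_2$ symmetrically and avoids $\ast$ and that normalization altogether. That identity does deserve the one line of justification you only gesture at: a one-dimensional two-sided ideal of a semisimple algebra is spanned by a central idempotent, so the right action on $\langle X_1\rangle$ is given by the same character $\rho_1$ as the left action (in a non-semisimple algebra the left and right characters of a one-dimensional two-sided ideal can differ, so Munn's semisimplicity theorem is genuinely used here, just as it is elsewhere in the paper). Finally, your worry about right multiplication by $s_i$ ``dropping rank'' is vacuous: invertibility of $s_i$ gives $\mathrm{Rd}_n[f]\,s_i=\mathrm{Rd}_n[f]$ outright; what Lemma \ref{xx3.4} is needed for is only that your elementary moves suffice to connect any two elements of $\mathcal{D}_r$.
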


\begin{proof}
We only prove $C_{D_1}=C_{D_2}$ and the other assertion $C'_{D_1}=C'_{D_2}$ can be proved similarly.
If $D_1=d_1^{-1}p_1p_2\cdots p_r\sigma d_2$ and $D_2=d_3^{-1}p_1p_2\cdots p_r\sigma d_2$ with
$d_1,d_2,d_3\in\mathcal{D}_r$ and $\sigma\in\mathfrak{S}_{\{r+1,r+2,\ldots,n\}}$, we claim that
$$
C_{D_1}=C_{D_2}.
$$
In fact, by Lemma \ref{xx3.4} it suffices to prove that
$$
C_{d_1^{-1}p_1p_2\cdots p_r\sigma d_2}=C_{s_id_1^{-1}p_1p_2\cdots p_r\sigma d_2},
$$
whenever $d_1s_i\in\mathcal{D}_r$ with $\ell(d_1s_i)=\ell(d_1)+1$.
Let us compare the coefficients of $D_1=d_1^{-1}p_1p_2\cdots p_r\sigma d_2$ in both sides of
the equality $s_iX_1=X_1$. Since $s_i$ is invertible in the rook monoid $R_n$,
we have by the concatenation rule of rook diagrams that $s_i {\rm Rd}_n[f]={\rm Rd}_n[f]$
for each integer $1\leqslant f\leqslant n$. Hence it is clear that our claim holds.

Let $\ast$ be the algebra anti-automorphism of $FR_n$ which is defined on generators by
$s_i^{\ast}=s_i$, $p_j^{\ast}=p_j$ for each $1\leqslant i\leqslant n-1$ and $1\leqslant j\leqslant n$
(see \cite{East} for example). Using Lemma \ref{xx3.3}, we see that $X_1^{\ast}=X_1$ and
$X_2^{\ast}=X_2$. Now combining the fact $X_1^{\ast}=X_1$ and the above claim, we have
$$
C_{d_1^{-1}p_1p_2\cdots p_r\sigma d_2}=C_{p_1p_2\cdots p_r\sigma d_2}=C_{p_1p_2\cdots p_r\sigma}
$$
for all $d_1,d_2\in\mathcal{D}_r$ and $\sigma\in\mathfrak{S}_{\{r+1,r+2,\ldots,n\}}$.
Therefore, to prove the lemma, it suffices to show that
$$
C_{p_1p_2\cdots p_r\sigma_1}=C_{p_1p_2\cdots p_r\sigma_2}
$$
for all $\sigma_1,\sigma_2\in\mathfrak{S}_{\{r+1,r+2,\ldots,n\}}$. Equivalently, it is enough to
show that $C_{p_1p_2\cdots p_r s_i\sigma}=C_{p_1p_2\cdots p_r\sigma}$ for any
$\sigma\in\mathfrak{S}_{\{r+1,r+2,\ldots,n\}}$ and $r+1\leqslant i\leqslant n$ satisfying
$\ell(s_i\sigma)=\ell(\sigma)+1$. Let us compare the coefficients of $p_1p_2\cdots p_r\sigma$ in both sides of
the equality $s_iX_1=X_1$. Since $s_i$ is invertible in the rook monoid $R_n$,
we have $s_i {\rm Rd}_n[f]={\rm Rd}_n[f]$ for each integer $1\leqslant f\leqslant n$ and hence
$C_{p_1p_2\cdots p_r s_i\sigma}=C_{p_1p_2\cdots p_r\sigma}$. This completes the proof of the lemma.
\end{proof}

The following proposition is the first main result in this section, which reveals a
similarity between the elements $X_1, X_2$ with the symmetrizer and anti-symmetrizer in the
symmetric group case.

\begin{proposition}\label{xx3.6}
The elements $X_1$, $X_2$ can be taken of the following form:
$$
X_1=\sum_{\sigma\in\mathfrak{S}_n}\sigma +\sum_{r=1}^{n}(-1)^rr! \sum_{D\in {\rm Rd}_n[r]} D,
$$
$$
X_2=\sum_{\sigma\in\mathfrak{S}_n}\sgn(\sigma)\sigma +\sum_{D\in {\rm Rd}_n[1]}\sgn(D) D.
$$
\end{proposition}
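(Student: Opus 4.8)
The plan is to determine the coefficients $C_D$ and $C'_D$ explicitly. By Lemma~\ref{xx3.5}, for each fixed $r$ the coefficient $C_D$ (resp.\ $C'_D \sgn(D)$) is constant across all $D \in {\rm Rd}_n[r]$; write these common values as $c_r$ and $c'_r$ respectively, so that the task reduces to computing the scalars $c_1,\ldots,c_n$ and $c'_1,\ldots,c'_n$, given the normalization $c_0 = 1$ coming from the symmetric-group symmetrizer $\sum_{\sigma}\sigma$. The defining property is that $\langle X_1\rangle$ affords $\rho_1$ and $\langle X_2\rangle$ affords $\rho_2$; equivalently $p_j X_1 = \rho_1(p_j) X_1 = 0$ and $p_j X_2 = 0$ for every $j$, since $\rho_i(p_j)=0$. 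First I would exploit the relation $p_1 X_1 = 0$: applying $p_1$ kills every diagram whose top vertex $1$ is \emph{not} isolated and fixes those in which it is, while also identifying certain diagrams of rank $r$ with diagrams of rank $r-1$ after multiplication. Collecting the coefficient of a fixed target diagram in $p_1 X_1 = 0$ will produce a recursion relating $c_r$ to $c_{r-1}$ (and similarly $c'_r$ to $c'_{r-1}$).

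The key combinatorial input is a counting step. Fixing a diagram $D_0 \in {\rm Rd}_n[r]$ with isolated top vertex $1$, I would count how many diagrams $D \in {\rm Rd}_n[r-1]$ (those with one \emph{fewer} isolated vertex) satisfy $p_1 D = D_0$, and how many $D \in {\rm Rd}_n[r]$ satisfy $p_1 D = D_0$. A diagram $D \in {\rm Rd}_n[r-1]$ maps to $D_0$ under $p_1$ precisely when $D$ agrees with $D_0$ except that top vertex $1$ carries an edge to some bottom vertex; the number of such $D$ is controlled by the available non-isolated bottom vertices, which is exactly how the factorial $r!$ will emerge. Balancing the equation $c_{r-1}\cdot(\text{number of such }D) + c_r\cdot(\text{number of rank-}r\text{ preimages}) = 0$ should yield the recursion $c_r = -r\, c_{r-1}$, whence $c_r = (-1)^r r!$ by induction from $c_0 = 1$. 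For $X_2$ the sign twist $\sgn(D) = (-1)^{r+\ell(D)}$ enters the bookkeeping, and I expect massive cancellation: the signed sum over the $r!$ preimages of a given target should telescope to leave $c'_r = 0$ for all $r \geq 2$, while $c'_1 = 1$ survives, matching the claimed formula with only the ${\rm Rd}_n[1]$ term present.

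I would organize the computation diagrammatically rather than through generators, since Proposition~\ref{xx2.1} gives the canonical basis and the action of $p_1$ on a diagram $d_1^{-1}p_1\cdots p_s\sigma d_2$ is transparent: $p_1$ annihilates $D$ unless the top vertex $1$ is already isolated, i.e.\ unless $1 \in \{(1)d_1,\ldots,(s)d_1\}$, and otherwise $p_1 D$ is the diagram obtained by deleting the edge at top vertex $1$, raising the isolated-vertex count by one. Tracking which canonical quadruples $(d_1,d_2,s,\sigma)$ produce a fixed $D_0$ under this operation is the heart of the argument. The anti-automorphism $\ast$ from Lemma~\ref{xx3.5}, together with $X_i^\ast = X_i$, lets me use $X_1 p_1 = 0$ symmetrically if a one-sided computation becomes awkward near the bottom row.

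The main obstacle I anticipate is the precise preimage count in the unsigned case: establishing that exactly $r$ rank-$(r-1)$ diagrams and a controlled set of rank-$r$ diagrams feed into each target $D_0$ under $p_1$, accounting correctly for the $\sigma$-permutation of the non-isolated strands, so that the recursion $c_r = -r\,c_{r-1}$ comes out cleanly. In the signed case the obstacle shifts to verifying the sign cancellation: I must check that the parities $(-1)^{\ell(D)}$ of the preimage diagrams are evenly split between even and odd once $r \geq 2$ (guaranteeing $c'_r = 0$), while for $r = 1$ the solitary structure forces no cancellation and leaves $c'_1 = 1$. Getting the $\ell(D)$ bookkeeping right under the edge-deletion operation is where I expect to spend the most care.
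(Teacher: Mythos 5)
Your overall strategy coincides with the paper's (Lemmas \ref{xx3.7} and \ref{xx3.8}): reduce to scalars $c_r$, $c'_r$ via Lemma \ref{xx3.5}, exploit $p_1X_i=0$, and compare coefficients of a fixed rank-$r$ diagram on both sides to obtain a recursion. Your unsigned computation is correct and matches Lemma \ref{xx3.7}: for a target $D_0\in{\rm Rd}_n[r]$ whose top vertex $1$ is isolated, there are exactly $r$ preimages in ${\rm Rd}_n[r-1]$ under left multiplication by $p_1$, one for each of the $r$ \emph{isolated} bottom vertices of $D_0$ (your phrase ``available non-isolated bottom vertices'' is the one inaccuracy here), and exactly one preimage in ${\rm Rd}_n[r]$, namely $D_0$ itself; this gives $c_r=-r\,c_{r-1}$, hence $c_r=(-1)^rr!$. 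Note also that there are $r$ preimages, not $r!$ as you write at one point; the factorial emerges only from iterating the recursion.

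The signed case, however, contains a genuine gap: your proposed mechanism --- that for every $r\geqslant 2$ the signs $\sgn(D)$ of the preimages are evenly split between $+1$ and $-1$ --- is false whenever $r$ is odd. Take $r=3$ and $D_0=p_1p_2p_3$. The three preimages $D_i\in{\rm Rd}_n[2]$ ($i=1,2,3$), having edge $\{1,i^-\}$, vertical edges $\{j,j^-\}$ for $j\geqslant 4$ and all other vertices isolated, satisfy $\ell(D_i)=(r-1)+(r-i)$, so $\sgn(D_i)=(-1)^{(r-1)+\ell(D_i)}=(-1)^{3-i}$; the signs are $+1,-1,+1$, with sum $1\neq 0$. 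In general $\sum_{i=1}^{r}(-1)^{r-i}$ equals $0$ for $r$ even but $1$ for $r$ odd, so the coefficient equation reads $c'_{r-1}\sum_{i=1}^{r}(-1)^{r-i}+(-1)^rc'_r=0$ (this is exactly the equation in Lemma \ref{xx3.8}). For even $r$ your cancellation argument does yield $c'_r=0$ directly, but for odd $r$ it yields only $c'_r=c'_{r-1}$, and one must then invoke the inductive hypothesis (with $r-1\geqslant 2$ even) to conclude $c'_r=0$. So the conclusion $c'_r=0$ for all $r\geqslant 2$ is true, but only via this alternating two-step induction, not via the uniform even-split cancellation you plan to verify; had you carried out the check you flag as your main obstacle, it would have failed already at $r=3$.
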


We divide the proof of Proposition \ref{xx3.6} into two lemmas for comfortable reading.

\begin{lemma}\label{xx3.7}
The element $X_1$ can be taken of the following form:
$$
X_1=\sum_{\sigma\in\mathfrak{S}_n}\sigma +\sum_{r=1}^{n}(-1)^rr! \sum_{D\in {\rm Rd}_n[r]} D.
$$
\end{lemma}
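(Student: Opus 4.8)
The plan is to pin down the unknown coefficients in $X_1$ by exploiting the fact that $\langle X_1\rangle$ corresponds to the one-dimensional representation $\rho_1$. By Lemmas \ref{xx3.3} and \ref{xx3.5} I may already write
$$
X_1=\sum_{\sigma\in\mathfrak{S}_n}\sigma+\sum_{r=1}^{n}c_r\sum_{D\in{\rm Rd}_n[r]}D
$$
for scalars $c_r\in F$, setting $c_0=1$ for the symmetric group part, so the whole task reduces to showing $c_r=(-1)^rr!$. Since $\langle X_1\rangle$ is one-dimensional and affords $\rho_1$, left multiplication by any $a\in FR_n$ satisfies $aX_1=\rho_1(a)X_1$; in particular $p_1X_1=\rho_1(p_1)X_1=0$. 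I would extract a recursion for the $c_r$ by comparing the coefficients of suitable rook diagrams in the identity $p_1X_1=0$.

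First I would record the diagrammatic effect of left multiplication by $p_1$. Viewing $p_1$ as the diagram with isolated vertices $1$ and $1^-$ and all other edges vertical, the concatenation rule shows that $p_1D=D$ whenever the top vertex $1$ of $D$ is already isolated, while if the top vertex $1$ of $D$ carries an edge, then $p_1D$ deletes that edge, isolating both the top vertex $1$ and its former bottom partner; in the latter case $p_1$ sends $D\in{\rm Rd}_n[r]$ into ${\rm Rd}_n[r+1]$. Similarly $p_1\sigma\in{\rm Rd}_n[1]$ for every $\sigma\in\mathfrak{S}_n$. Consequently, in $p_1X_1$ the diagrams landing at a fixed level $s$ come only from the level-$s$ diagrams with top vertex $1$ isolated (which $p_1$ fixes) and from the level-$(s-1)$ diagrams with top vertex $1$ occupied (which $p_1$ pushes up), together with the symmetric group part when $s=1$.

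Next I would fix a diagram $E\in{\rm Rd}_n[s]$ whose top vertex $1$ is isolated and count its preimages under these maps. The only level-$s$ contribution is $E$ itself, yielding $c_s$. A diagram $D\in{\rm Rd}_n[s-1]$ satisfies $p_1D=E$ precisely when $D$ is obtained from $E$ by joining the top vertex $1$ to one of the $s$ isolated bottom vertices of $E$, which gives exactly $s$ distinct preimages, each contributing $c_{s-1}$. When $s=1$ there is in addition a unique $\sigma\in\mathfrak{S}_n$ with $p_1\sigma=E$, forced on $\{2,\ldots,n\}$ by the edges of $E$ and sending $1$ to the isolated bottom vertex, contributing $1$. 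Equating the coefficient of $E$ in $p_1X_1$ to zero thus gives $c_1+1=0$ and $c_s+s\,c_{s-1}=0$ for $s\geqslant 2$, that is $c_1=-1$ and $c_s=-s\,c_{s-1}$. Solving this recursion with $c_0=1$ yields $c_r=(-1)^rr!$, as claimed.

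I expect the main obstacle to lie in the careful bookkeeping of the concatenation: one must verify exactly which bottom vertex of $E$ becomes isolated, confirm that the $s$ reconnections of the top vertex $1$ produce genuinely distinct diagrams in ${\rm Rd}_n[s-1]$, and check the uniqueness of the permutation in the case $s=1$, so that no preimage is double-counted or overlooked. These verifications are elementary once the action of $p_1$ by concatenation is made precise, but they demand attention.
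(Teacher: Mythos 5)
Your proposal is correct and follows essentially the same route as the paper: both start from the form guaranteed by Lemma \ref{xx3.5}, exploit the identity $p_1X_1=0$ (coming from the ideal $\langle X_1\rangle$ affording $\rho_1$), and count preimages under left concatenation by $p_1$ to obtain the recursion $c_1=-1$, $c_s=-s\,c_{s-1}$, hence $c_r=(-1)^r r!$. The only (inessential) difference is that the paper compares coefficients of the specific diagram $p_1p_2\cdots p_r$, whereas you compare coefficients of an arbitrary $E\in{\rm Rd}_n[s]$ with top vertex $1$ isolated; the counts ($s$ preimages at level $s-1$, one at level $s$, plus the unique permutation when $s=1$) are identical.
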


\begin{proof}
By Lemma \ref{xx3.5}, we can take $X_1$ of the following form:
$$
X_1=\sum_{\sigma\in\mathfrak{S}_n}\sigma +\sum_{r=1}^{n}c_r \sum_{D\in {\rm Rd}_n[r]} D,
$$
where $c_r\in F$ for each $1\leqslant r\leqslant n$. It remains to compute these $c_r$ explicitly.

We first compute $c_1$. The strategy we shall use is to compare the coefficients of $p_1$ in both
sides of the equality
$$
0=p_1X_1=\sum_{\sigma\in\mathfrak{S}_n}p_1\sigma +\sum_{r=1}^{n}c_r \sum_{D\in {\rm Rd}_n[r]}p_1D. \eqno(3.1)
$$
Note that by the concatenation rule of rook diagrams $p_1D\in {\rm Rd}_n$ for all $D\in {\rm Rd}_n$
and $p_1$ is involved in $p_1D$ only if $D\in {\rm Rd}_n[0] \cup {\rm Rd}_n[1]$.
If $D\in {\rm Rd}_n[0]$, then it is easy to see that $p_1$ is involved in $p_1D$ ($p_1=p_1D$ in this case)
if and only if $D=1$, the identity element of rook monoid $R_n$. If $D\in {\rm Rd}_n[1]$, then
$p_1$ is involved in $p_1D$ (also $p_1=p_1D$ in this case) if and only if $D=p_1$. Therefore, the
coefficient of $p_1$ in the right-hand side of the equality $(3.1)$ is $1+c_1$ which implies that $c_1=-1$.

In general, suppose that $2\leqslant r\leqslant n$. The strategy we shall use to compute $c_r$ is to
compare the coefficients of $p_1p_2\cdots p_r$ in both sides of the equality $(3.1)$. We claim that
$p_1p_2\cdots p_r$ is involved in $p_1D$ (in this case, $p_1D=p_1p_2\cdots p_r$ since $p_1D\in {\rm Rd}_n$)
only if $D\in {\rm Rd}_n[r-1] \cup {\rm Rd}_n[r]$. In fact, by the concatenation rule of rook diagrams,
we know that $p_1D=p_1p_2\cdots p_r$ only if $D\in {\rm Rd}_n[0]\cup {\rm Rd}_n[1]\cup\cdots\cup {\rm Rd}_n[r]$.
However, if $D\in {\rm Rd}_n[0]\cup\cdots\cup {\rm Rd}_n[r-2]$, then $p_1D$ have at most $r-1$ isolated
vertices in each row and this proves our claim.

If $D\in {\rm Rd}_n[r-1]$, then (using the concatenation rule of rook diagrams) it is clear that
$p_1p_2\cdots p_r$ is involved in $p_1D$, i.e., $p_1D=p_1p_2\cdots p_r$, if and only if there exists
an integer $1\leqslant i\leqslant r$ such that \begin{enumerate}
\item[(1)] $\{1,i^{-}\}$ is an edge of $D$; and
\item[(2)] $\{j,j^{-}\}$ is an edge of $D$ for all $r+1\leqslant j\leqslant n$; and
\item[(3)] all other vertices of $D$ are isolated vertices.
\end{enumerate}
In this case, the number of such rook $n$-diagram $D$ is $r$.

If $D\in {\rm Rd}_n[r]$, it is easy to see that $p_1p_2\cdots p_r$ is involved in $p_1D$,
i.e., $p_1D=p_1p_2\cdots p_r$, if and only if $D=p_1p_2\cdots p_r$.

Therefore, the coefficient of $p_1p_2\cdots p_r$ in the right-hand side of the equality $(3.1)$ is
$rc_{r-1}+c_r$, which implies that $c_r=(-1)rc_{r-1}=(-1)^rr!$ by a simple induction argument.
\end{proof}

\begin{lemma}\label{xx3.8}
The element $X_2$ can be taken of the following form:
$$
X_2=\sum_{\sigma\in\mathfrak{S}_n}\sgn(\sigma)\sigma +\sum_{D\in {\rm Rd}_n[1]}\sgn(D) D.
$$
\end{lemma}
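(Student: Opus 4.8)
The plan is to pin down the scalars that survive when $X_2$ is written, via Lemmas \ref{xx3.3} and \ref{xx3.5}, in the form
$$
X_2=\sum_{r=0}^{n}c'_r\sum_{D\in {\rm Rd}_n[r]}\sgn(D)\,D,
$$
where $c'_0=1$, so that the $r=0$ summand is exactly $\sum_{\sigma\in\mathfrak{S}_n}\sgn(\sigma)\sigma$ (recall $\sgn(D)=(-1)^{\ell(\sigma)}=\sgn(\sigma)$ for a permutation, by Definition \ref{xx3.2}). The lemma then reduces to the two claims $c'_1=1$ and $c'_r=0$ for $2\leqslant r\leqslant n$.

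The driving identity is $p_1X_2=0$, the exact analogue of the relation $p_1X_1=0$ used in Lemma \ref{xx3.7}. Indeed, $\langle X_2\rangle$ is the one-dimensional two-sided ideal affording $\rho_2$, so it is spanned by $X_2$ and isomorphic as a left module to $\rho_2$; hence $p_1X_2=\rho_2(p_1)X_2=0$ because $\rho_2(p_1)=0$. I would then exploit this exactly as in Lemma \ref{xx3.7}, by comparing the coefficient of $p_1p_2\cdots p_r$ on the two sides of $0=p_1X_2$ for each $1\leqslant r\leqslant n$.

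The combinatorial heart of the computation is already available from the proof of Lemma \ref{xx3.7}: the rook $n$-diagrams $D$ with $p_1D=p_1p_2\cdots p_r$ are precisely the $r$ diagrams $D_i\in {\rm Rd}_n[r-1]$ $(1\leqslant i\leqslant r)$ singled out by conditions (1)--(3) there, together with $D=p_1p_2\cdots p_r\in {\rm Rd}_n[r]$. The only genuinely new ingredient for $X_2$ is the sign bookkeeping. Resolving $D_i$ into its canonical quadruple $d_1^{-1}p_1\cdots p_{r-1}\sigma d_2$ (via the description following Proposition \ref{xx2.1}), I expect to find that $d_1$ is the relevant $r$-cycle with $\ell(d_1)=r-1$, that $\sigma=1$, and that $\ell(d_2)=r-i$, whence $\sgn(D_i)=(-1)^{(r-1)+\ell(D_i)}=(-1)^{r+i}$, while $\sgn(p_1\cdots p_r)=(-1)^r$. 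This resolution of each $D_i$ into the quadruple, and the careful reading off of the three length contributions, is the step I anticipate as the main obstacle.

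Assembling the coefficients then yields, for every $1\leqslant r\leqslant n$, the recurrence
$$
0=c'_{r-1}\sum_{i=1}^{r}(-1)^{r+i}+(-1)^r c'_r .
$$
Since $\sum_{i=1}^{r}(-1)^{r+i}$ equals $0$ when $r$ is even and $1$ when $r$ is odd, the case $r=1$ gives $0=c'_0-c'_1=1-c'_1$, so $c'_1=1$. A short induction from this base then forces $c'_r=0$ for all $r\geqslant 2$: each even index vanishes immediately because the sum vanishes, and each odd index $r\geqslant 3$ is tied through the recurrence to the preceding even index $r-1$, which already vanishes. This establishes the asserted form of $X_2$.
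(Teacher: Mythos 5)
Your proposal is correct and follows essentially the same route as the paper: reduce to determining the scalars $c'_r$ via Lemmas \ref{xx3.3} and \ref{xx3.5}, compare coefficients of $p_1p_2\cdots p_r$ in $0=p_1X_2$, identify the same $r+1$ contributing diagrams with signs $(-1)^{r+i}$ (the paper writes $(-1)^{r-i}$, which is the same) and $(-1)^r$, and run the same parity induction. The only cosmetic differences are that you explicitly justify $p_1X_2=0$ via $\rho_2(p_1)=0$ (which the paper leaves implicit) and fold the base case $r=1$ into the general recurrence rather than treating it separately.
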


\begin{proof}
Our proof is similar with that given in the above lemma, but we add it here for completeness.
By Lemma \ref{xx3.5}, we can take $X_2$ of the following form:
$$
X_2=\sum_{\sigma\in\mathfrak{S}_n}\sgn(\sigma)\sigma +\sum_{r=1}^n c'_r\sum_{D\in {\rm Rd}_n[r]}\sgn(D) D.
$$
where $c'_r\in F$ for each $1\leqslant r\leqslant n$. It remains to compute these $c'_r$ explicitly.

We first compute $c'_1$. The strategy we shall use is to compare the coefficients of $p_1$ in both
sides of the equality
$$
0=p_1X_2=\sum_{\sigma\in\mathfrak{S}_n}\sgn(\sigma)p_1\sigma +\sum_{r=1}^n c'_r\sum_{D\in {\rm Rd}_n[r]}\sgn(D) p_1D. \eqno(3.2)
$$
Note that by the concatenation rule of rook diagrams $p_1D\in {\rm Rd}_n$ for all $D\in {\rm Rd}_n$
and $p_1$ is involved in $p_1D$ only if $D\in {\rm Rd}_n[0] \cup {\rm Rd}_n[1]$.
If $D\in {\rm Rd}_n[0]$, then it is easy to see that $p_1$ is involved in $p_1D$ ($p_1=p_1D$ in this case)
if and only if $D=1$, the identity element of rook monoid $R_n$. If $D\in {\rm Rd}_n[1]$, then
$p_1$ is involved in $p_1D$ (also $p_1=p_1D$ in this case) if and only if $D=p_1$. Therefore, the
coefficient of $p_1$ in the right-hand side of the equality $(3.2)$ is $1+(-1)c'_1$,
since $\sgn(p_1)=-1$, which implies that $c'_1=1$.

In general, suppose that $2\leqslant r\leqslant n$. The strategy we shall use to compute $c'_r$ is to
compare the coefficients of $p_1p_2\cdots p_r$ in both sides of the equality $(3.2)$. We have known that
$p_1p_2\cdots p_r$ is involved in $p_1D$ only if $D\in {\rm Rd}_n[r-1] \cup {\rm Rd}_n[r]$.

If $D\in {\rm Rd}_n[r-1]$, then (using the concatenation rule of rook diagrams) it is clear that
$p_1p_2\cdots p_r$ is involved in $p_1D$, i.e., $p_1D=p_1p_2\cdots p_r$, if and only if there exists
an integer $1\leqslant i\leqslant r$ such that \begin{enumerate}
\item[(1)] $\{1,i^{-}\}$ is an edge of $D$; and
\item[(2)] $\{j,j^{-}\}$ is an edge of $D$ for all $r+1\leqslant j\leqslant n$; and
\item[(3)] all other vertices of $D$ are isolated vertices.
\end{enumerate}
Let $D$ be such a rook diagram. Then $\ell(D)=(r-1)+(r-i)$ and hence $\sgn(D)=(-1)^{r-i}$.
At the same time, the number of such rook $n$-diagram is $r$.

If $D\in {\rm Rd}_n[r]$, it is easy to see that $p_1p_2\cdots p_r$ is involved in $p_1D$,
i.e., $p_1D=p_1p_2\cdots p_r$, if and only if $D=p_1p_2\cdots p_r$. In this case, $\sgn(D)=(-1)^r$.

Therefore, the coefficient of $p_1p_2\cdots p_r$ in the right-hand side of the equality $(3.2)$ is
$$
c'_{r-1}\sum_{i=1}^r (-1)^{r-i}+c'_r(-1)^r,
$$
which implies that $c'_2=\cdots=c'_n=0$ by a simple induction argument.
\end{proof}

\noindent{\bf Proof of Proposition \ref{xx3.6}.} This follows immediately from Lemma \ref{xx3.7}
and Lemma \ref{xx3.8}. \qed

For convenience, we call $X_1$, $X_2$ the symmetrizer and anti-symmetrizer respectively of
rook monoid $R_n$.
Now we turn to study the decomposition of the rook monoid algebra $FR_n$. Let $S$ be a subset
of $\{1,2,\ldots,n\}$. We define
$$
{\rm Rd}_S:=\{D\in {\rm Rd}_n\ |\ \{j,j^{-}\}\ \text{is an edge of}\ D\
\text{for}\ j\not\in S \}.
$$
It is easy to verify that the set ${\rm Rd}_S$ forms a submonoid of $R_n$ and we denote this
submonoid as $R_S$, which is isomorphic to rook monoid $R_{|S|}$, where $|S|$ means the coordinate
of $S$. Especially, we consider $R_i$ as the submonoid $R_{\{1,2,\ldots,i\}}$ with $1\leqslant i
\leqslant n$. Furthermore, we write $X_{S}$ (resp. $Y_S$) the symmetrizer (resp. anti-symmetrizer)
of the rook monoid $R_S$. If $s_{ij}$ is the simple transposition which interchanges $i$ and $j$
with $i,j\in S$, then $s_{ij}X_S=X_S s_{ij}=X_S$, $s_{ij}Y_S=Y_S s_{ij}=-Y_S$ by definition.
If $i\in S$, then $p_iX_S=X_S p_i=0$, $p_iY_S=Y_S p_i=0$ by definition.

Let $\lambda\vdash r$ be a partition with $0\leqslant r\leqslant n$.
For a given $\lambda_r^n$-tableau $\mathfrak{t}$, recall that $\mathcal{R}_i$ (resp. $\mathcal{C}_j$) is the
set of entries in the $i$-th row (resp. the $j$-th column) of $\mathfrak{t}$.

\begin{definition}\label{xx3.9}
Let $\lambda\vdash r$ with $0\leqslant r\leqslant n$ and $\mathfrak{t}$ a $\lambda_r^n$-tableau. Define
$$
e(\mathfrak{t}):=Y_{\mathcal{C}_1}Y_{\mathcal{C}_2}\cdots Y_{\mathcal{C}_{\lambda_1}}
X_{\mathcal{R}_1}X_{\mathcal{R}_2}\cdots X_{\mathcal{R}_{\ell(\lambda)}}
\prod_{i\not\in {\rm cont}(\mathfrak{t})}p_i.
$$
\end{definition}

It is helpful to point out that $e(\mathfrak{t})\neq 0$. In fact, let
$\phi: FR_n\twoheadrightarrow FR_n/ R_n^{(n-r+1)}$ be the canonical epimorphism. Then the image
$\phi(e(\mathfrak{t}))\neq 0$ by the representation theory of symmetric group
(see \cite[\S 14.7]{van}). Note that when $r=0$, $e(\emptyset)=p_1p_2\cdots p_n$,
where $\emptyset$ denotes the empty partition. For a partition $\lambda\vdash r$ with $0\leqslant r\leqslant n$,
recall that $R^{\lambda}$ is the Specht module corresponding to $\lambda$ (see the Section \ref{xxsec2.2}).

\begin{lemma}\label{xx3.10}
Let $\lambda\vdash r$ be a partition with $0\leqslant r\leqslant n$ and $\mathfrak{t}$ a $\lambda_r^n$-tableau.
For any partition $\mu\vdash f$ with $0\leqslant f\leqslant n$, we have
$e(\mathfrak{t})R^{\mu}=0$ unless $\lambda=\mu$.
\end{lemma}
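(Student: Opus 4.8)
The plan is to prove the stronger pointwise statement that $e(\mathfrak{t})e_{\mathfrak{s}}=0$ for every $\mu_f^n$-tableau $\mathfrak{s}$ whenever $\lambda\neq\mu$, since $R^{\mu}$ is spanned by such $n$-polytabloids $e_{\mathfrak{s}}$. Writing $T:={\rm cont}(\mathfrak{t})$ and $P:=\prod_{i\not\in T}p_i$ for the trailing factor of $e(\mathfrak{t})$, I would separate the argument into two layers: first show that a nonzero product forces ${\rm cont}(\mathfrak{s})=T$ (which already disposes of every $\mu$ with $|\mu|=f\neq r=|\lambda|$), and then, in the equal-content case, reduce the computation to a pure symmetric-group calculation and invoke the classical theory of Young symmetrisers.

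For the content-forcing layer I would use two facts. On one hand $P$ acts as the identity on any tabloid whose content avoids $\{1,\dots,n\}\setminus T$ and annihilates the rest, so by Lemma \ref{xx2.4} we have $Pe_{\mathfrak{s}}=e_{\mathfrak{s}}$ if ${\rm cont}(\mathfrak{s})\subseteq T$ and $Pe_{\mathfrak{s}}=0$ otherwise; in particular only $\mathfrak{s}$ with ${\rm cont}(\mathfrak{s})\subseteq T$ can survive. On the other hand, the crucial observation is that for any index $k\not\in{\rm cont}(\mathfrak{s})$ one has $p_ke_{\mathfrak{s}}=e_{\mathfrak{s}}$ (the column-stabiliser permutations occurring in $e_{\mathfrak{s}}$ fix $k$ and hence commute with $p_k$, while $p_k$ fixes the tabloid $\{\mathfrak{s}\}$). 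Thus if some row satisfies $\mathcal{R}_i\not\subseteq{\rm cont}(\mathfrak{s})$, choosing $k\in\mathcal{R}_i\setminus{\rm cont}(\mathfrak{s})$ gives $X_{\mathcal{R}_i}e_{\mathfrak{s}}=X_{\mathcal{R}_i}p_ke_{\mathfrak{s}}=0$, using the relation $X_{\mathcal{R}_i}p_k=0$ valid for $k\in\mathcal{R}_i$. Since the factors $X_{\mathcal{R}_1},\dots,X_{\mathcal{R}_{\ell(\lambda)}}$ have pairwise disjoint supports and therefore commute, such an $X_{\mathcal{R}_i}$ can be moved to the right and applied first, killing the whole product. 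Hence survival forces $\mathcal{R}_i\subseteq{\rm cont}(\mathfrak{s})$ for all $i$, i.e. $T\subseteq{\rm cont}(\mathfrak{s})$, which together with ${\rm cont}(\mathfrak{s})\subseteq T$ yields ${\rm cont}(\mathfrak{s})=T$ and in particular $f=r$.

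In the remaining case ${\rm cont}(\mathfrak{s})=T$, I would show that all diagrams with isolated vertices drop out. Any rook diagram involved in $X_{\mathcal{R}_i}$ or $Y_{\mathcal{C}_j}$ other than a genuine permutation has an isolated bottom vertex labelled inside $\mathcal{R}_i\subseteq T$ (resp. $\mathcal{C}_j\subseteq T$), hence annihilates every $n$-polytabloid of content $T$ by the description of the $R_n$-action and Lemma \ref{xx2.4}; moreover the permutation part preserves content $T$, so the reduction propagates through the whole product from right to left. Consequently $e(\mathfrak{t})e_{\mathfrak{s}}$ equals $\bigl(\prod_j\bar{Y}_{\mathcal{C}_j}\bigr)\bigl(\prod_i\bar{X}_{\mathcal{R}_i}\bigr)e_{\mathfrak{s}}$, where $\bar{X}_{\mathcal{R}_i}=\sum_{\sigma\in\mathfrak{S}_{\mathcal{R}_i}}\sigma$ and $\bar{Y}_{\mathcal{C}_j}=\sum_{\sigma\in\mathfrak{S}_{\mathcal{C}_j}}\sgn(\sigma)\sigma$ are the row-symmetriser and column-antisymmetriser in $F\mathfrak{S}_T$, and $e_{\mathfrak{s}}$ is exactly a polytabloid of the symmetric-group Specht module $S^{\mu}$ for $\mathfrak{S}_T\cong\mathfrak{S}_r$. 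This operator is the classical Young symmetriser $b_{\mathfrak{t}}a_{\mathfrak{t}}$ attached to the $\lambda$-tableau $\mathfrak{t}$, a nonzero scalar multiple of a primitive idempotent lying in the block of $F\mathfrak{S}_r$ indexed by $\lambda$; since $F\mathfrak{S}_r$ is semisimple with blocks indexed by partitions of $r$, it annihilates $S^{\mu}$ for every $\mu\neq\lambda$. This gives $e(\mathfrak{t})e_{\mathfrak{s}}=0$ and completes the proof.

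I expect the main obstacle to be the content-forcing step when $f<r$: there $e_{\mathfrak{s}}$ does not have full content, so the $p$-terms of the symmetrisers genuinely act and the naive ``only permutations survive'' reduction fails (as one already sees for $n=2$, $\lambda=(1)$, $\mu=\emptyset$, where the vanishing comes from a cancellation rather than from individual terms dying). The identity $e_{\mathfrak{s}}=p_ke_{\mathfrak{s}}$ combined with $X_{\mathcal{R}_i}p_k=0$ is the device that turns this cancellation into a clean annihilation, and handling this interaction correctly, together with the commutativity of the row-symmetrisers, is the technical heart of the argument.
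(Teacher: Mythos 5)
Your proof is correct and follows essentially the same route as the paper's: the three devices you use (the trailing $p_i$'s killing polytabloids whose content is not contained in $\mathrm{cont}(\mathfrak{t})$, the identity $p_ke_{\mathfrak{s}}=e_{\mathfrak{s}}$ for $k\notin\mathrm{cont}(\mathfrak{s})$ combined with $X_{\mathcal{R}_i}p_k=0$, and the reduction to the classical Young symmetriser $b_{\mathfrak{t}}a_{\mathfrak{t}}$ in the equal-content case) are exactly the paper's Cases 1, 2 and 3, merely reorganized as a two-layer content-forcing argument instead of a case analysis on $|\mu|$ versus $|\lambda|$. Your write-up is, if anything, slightly more explicit than the paper about why the non-permutation diagrams drop out and the reduction propagates through the product of symmetrisers.
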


\begin{proof}
Let $a$ be an arbitrary nonzero element of $R^{\mu}$. We denote $T(\mu)$ the set of all $\mu_f^n$-tableau
for convenience. Write $a$ as a linear combination of $n$-polytabloids:
$$
a=\sum_{\mathfrak{s}\in T(\mu)}a_{\mathfrak{s}}e_{\mathfrak{s}},
$$
where $a_{\mathfrak{s}}\in F$. We now compute $e(\mathfrak{t})a$ and there are three cases occurring.

\textbf{Case 1.} $|\mu|>|\lambda|$. In this case, for an arbitrary $\mu_f^n$-tableau $\mathfrak{s}$,
there exists an integer $i\in {\rm cont}(\mathfrak{s})$ but $i\not\in {\rm cont}(\mathfrak{t})$.
Hence $p_i\mathfrak{s}=0$ (see the paragraph below Definition \ref{xx2.3}). It follows from the
Definition \ref{xx3.9} and Lemma \ref{xx2.4} that $e(\mathfrak{t})e_{\mathfrak{s}}=
e(\mathfrak{t})p_ie_{\mathfrak{s}}=0$ and hence $e(\mathfrak{t})a=0$.

\textbf{Case 2.} $|\mu|<|\lambda|$. From the proof of Case 1, we have
$$
e(\mathfrak{t})a=e(\mathfrak{t})\sum_{\mathfrak{s}\in T(\mu),{\rm cont}(\mathfrak{s})
\subseteq {\rm cont}(\mathfrak{t})}a_{\mathfrak{s}}e_{\mathfrak{s}}.
$$
Since $|\mu|<|\lambda|$, for an arbitrary $\mu_f^n$-tableau $\mathfrak{s}$, there
exists an integer $j\in {\rm cont}(\mathfrak{t})$ but $j\not\in {\rm cont}(\mathfrak{s})$.
Hence $p_j\mathfrak{s}=\mathfrak{s}$ and thus $p_j\{\mathfrak{s}\}=\{\mathfrak{s}\}$ by the action
of $R_n$ on $M^{\mu}$. Note that each term in the linear combination of tabloids that form
$e_{\mathfrak{s}}$ contains the exact same entries as $\mathfrak{s}$. Therefore,
$$
p_je_{\mathfrak{s}}=\sum_{\sigma\in C_{\mathfrak{s}}} \sgn(\sigma)p_j (\sigma\{\mathfrak{s}\})=
\sum_{\sigma\in C_{\mathfrak{s}}} \sgn(\sigma)p_j \{\sigma\mathfrak{s}\}=e_{\mathfrak{s}}.
$$
On the other hand, the elements $X_{\mathcal{R}_1}, X_{\mathcal{R}_2},\ldots, X_{\mathcal{R}_{\ell(\lambda)}}$
pairwise commute with each other. Since $j\in{\rm cont}(\mathfrak{t})$, there is
$X_{\mathcal{R}_1}X_{\mathcal{R}_2}\cdots X_{\mathcal{R}_{\ell(\lambda)}}p_j=0$.
Hence for an arbitrary $\mu_f^n$-tableau $\mathfrak{s}$, we have
$$
e(\mathfrak{t})e_{\mathfrak{s}}=e(\mathfrak{t})(p_je_{\mathfrak{s}})=
Y_{\mathcal{C}_1}Y_{\mathcal{C}_2}\cdots Y_{\mathcal{C}_{\lambda_1}}
(X_{\mathcal{R}_1}X_{\mathcal{R}_2}\cdots X_{\mathcal{R}_{\ell(\lambda)}}p_j)
\prod_{i\not\in {\rm cont}(\mathfrak{t})}p_i e_{\mathfrak{s}}=0.
$$
Then $e(\mathfrak{t})a=0$ when $|\mu|<|\lambda|$.

\textbf{Case 3.} $|\mu|=|\lambda|$. From the proof of Case 1, we have
$$\begin{aligned}
e(\mathfrak{t})a&=e(\mathfrak{t})\sum_{\mathfrak{s}\in T(\mu),{\rm cont}(\mathfrak{s})
\subseteq {\rm cont}(\mathfrak{t})}a_{\mathfrak{s}}e_{\mathfrak{s}}\\
&=e(\mathfrak{t})\sum_{\mathfrak{s}\in T(\mu),{\rm cont}(\mathfrak{s})
= {\rm cont}(\mathfrak{t})}a_{\mathfrak{s}}e_{\mathfrak{s}}.
\end{aligned}$$
Recall that $C_{\mathfrak{t}}$ is the column stabiliser subgroup of $\mathfrak{t}$ in
symmetric group $\mathfrak{S}_n$. Let $R_{\mathfrak{t}}:=\mathfrak{S}_{\mathcal{R}_1}\times \mathfrak{S}_{\mathcal{R}_2}\times
\cdots\times \mathfrak{S}_{\mathcal{R}_{\ell(\lambda)}}$ be the row stabiliser subgroup of $\mathfrak{t}$ in
symmetric group $\mathfrak{S}_n$. At the same time, for any $\mathfrak{s}\in T(\mu)$ satisfying
${\rm cont}(\mathfrak{s})={\rm cont}(\mathfrak{t})$, we have $p_je_{\mathfrak{s}}=0$ for all
$j\in {\rm cont}(\mathfrak{t})$ by Lemma \ref{xx2.4}. Hence it follows from the definition of $e(\mathfrak{t})$ that
$$
e(\mathfrak{t})a=\sum_{q\in C_{\mathfrak{t}}}\sum_{p\in R_{\mathfrak{t}}}\sgn(q)qp
\sum_{\mathfrak{s}\in T(\mu),{\rm cont}(\mathfrak{s})
= {\rm cont}(\mathfrak{t})}a_{\mathfrak{s}}e_{\mathfrak{s}}.
$$
Now by the representation theory of symmetric group (see \cite[\S 14.7]{van}), we have
$e(\mathfrak{t})a=0$ unless $\lambda=\mu$. This completes the proof of the lemma.
\end{proof}

Let $\lambda\vdash r$ be a partition with $0\leqslant r\leqslant n$. Let $\mathfrak{t}^{\lambda}$ be
the $\lambda_r^n$-tableau in which the numbers $1,2,\ldots,r$ appear in order along successive rows.

\begin{definition}\label{xx3.11}
Let $\lambda\vdash r$ with $0\leqslant r\leqslant n$. Define a two-sided ideal
$$
I(\lambda):=FR_ne(\mathfrak{t}^{\lambda})FR_n.
$$
\end{definition}

We now at the position to give out the decomposition of rook monoid algebra $FR_n$ as the direct sum of blocks.

\begin{theorem}\label{xx3.12}
For each partition $\lambda\vdash r$ with $0\leqslant r\leqslant n$, $I(\lambda)$ is a
minimal two-sided ideal and the rook monoid algebra can be decomposed as
$$
FR_n=\bigoplus_{r=0}^n\bigoplus_{\lambda\vdash r}I(\lambda).
$$
\end{theorem}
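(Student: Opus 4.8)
The plan is to identify each $I(\lambda)$ with a Wedderburn block of the semisimple algebra $FR_n$ and then read off the decomposition. First I would invoke the semisimplicity of $FR_n$ over a field of characteristic $0$ (Munn \cite{Munn2}): by the Wedderburn--Artin theorem, $FR_n$ is a direct sum of simple two-sided ideals (its blocks), one for each isomorphism class of irreducible module. By Theorem \ref{xx2.5} these classes are exactly the Specht modules $R^{\lambda}$ with $\lambda\vdash r$ and $0\leqslant r\leqslant n$, so I may write $FR_n=\bigoplus_{r=0}^n\bigoplus_{\lambda\vdash r}B_{\lambda}$, where $B_{\lambda}$ denotes the block on which $R^{\lambda}$ is, up to isomorphism, the unique simple module that is not annihilated. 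Each $B_{\lambda}$ is a simple algebra, hence a minimal two-sided ideal of $FR_n$, and it acts faithfully on $R^{\lambda}$ while annihilating every $R^{\mu}$ with $\mu\neq\lambda$.

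Next I would locate the element $e(\mathfrak{t}^{\lambda})$ inside a single block. Writing $e(\mathfrak{t}^{\lambda})=\sum_{\mu}b_{\mu}$ with $b_{\mu}\in B_{\mu}$, the action of $e(\mathfrak{t}^{\lambda})$ on any $R^{\mu}$ coincides with the action of its component $b_{\mu}$, since all other components lie in blocks annihilating $R^{\mu}$. By Lemma \ref{xx3.10} we have $e(\mathfrak{t}^{\lambda})R^{\mu}=0$ for every $\mu\neq\lambda$; because $B_{\mu}$ acts faithfully on $R^{\mu}$, this forces $b_{\mu}=0$ for all $\mu\neq\lambda$. Hence $e(\mathfrak{t}^{\lambda})=b_{\lambda}\in B_{\lambda}$, and consequently the two-sided ideal of Definition \ref{xx3.11} satisfies $I(\lambda)=FR_ne(\mathfrak{t}^{\lambda})FR_n\subseteq B_{\lambda}$.

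Finally I would upgrade this inclusion to an equality. The remark following Definition \ref{xx3.9} guarantees $e(\mathfrak{t}^{\lambda})\neq 0$, so $I(\lambda)$ is a nonzero two-sided ideal contained in $B_{\lambda}$. Because $B_{\lambda}$ is a simple algebra with identity (its central idempotent), a two-sided ideal of $FR_n$ contained in $B_{\lambda}$ is a two-sided ideal of $B_{\lambda}$, and thus is either $0$ or all of $B_{\lambda}$; therefore $I(\lambda)=B_{\lambda}$. This simultaneously shows that $I(\lambda)$ is a minimal two-sided ideal and that $FR_n=\bigoplus_{r=0}^n\bigoplus_{\lambda\vdash r}I(\lambda)$, since the right-hand side is now literally the Wedderburn decomposition. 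The only step demanding genuine care is the faithfulness used in the second paragraph: I must verify that each $B_{\mu}$ acts faithfully on $R^{\mu}$, which holds because the kernel of that action would be a proper two-sided ideal of the simple algebra $B_{\mu}$ and hence zero. With that in hand, the vanishing provided by Lemma \ref{xx3.10} truly pins $e(\mathfrak{t}^{\lambda})$ into the single correct block rather than merely constraining its action, and the theorem follows.
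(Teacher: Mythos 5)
Your proposal is correct and takes essentially the same route as the paper: both rest on semisimplicity plus Wedderburn--Artin and Theorem \ref{xx2.5} to obtain the block decomposition, then use Lemma \ref{xx3.10} together with $e(\mathfrak{t}^{\lambda})\neq 0$ to identify $\langle e(\mathfrak{t}^{\lambda})\rangle$ with the block belonging to $R^{\lambda}$. The only difference is cosmetic: you unpack the identification via block components and faithfulness, while the paper phrases it as ``every two-sided ideal is a sum of blocks,'' and you omit the paper's cellularity digression (East, Graham--Lehrer) proving $D_{\lambda}=F$, which is indeed not needed for this statement.
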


\begin{proof}
Munn proved that the rook monoid algebra $FR_n$ is semisimple in \cite[Theorem 3.1]{Munn2}. By the well-known
Wedderburn-Artin Theorem and Theorem \ref{xx2.5}, there is
$$
FR_n=\bigoplus_{r=0}^n\bigoplus_{\lambda\vdash r}I_{\lambda}
\cong\bigoplus_{r=0}^n\bigoplus_{\lambda\vdash r}M_{n_{\lambda}}(D_{\lambda}),\eqno(3.3)
$$
where $I_{\lambda}$ is a simple subalgebra which is isomorphic to
the full matrix algebra $M_{n_{\lambda}}(D_{\lambda})$ of degree $n_{\lambda}$ and $D_{\lambda}$ is a finite
dimensional division algebra over $F$. Furthermore, $M_{n_{\lambda}}(D_{\lambda})$ corresponds to
the irreducible Specht module $R^{\lambda}$, i.e., $R^{\lambda}$ is the unique (up to isomorphism)
irreducible module of simple algebra $I_{\lambda}$.
On the other hand, East \cite{East} proved that the
rook monoid algebra $FR_n$ is a cellular algebra in the sense of Graham and Lehrer \cite{GL}.
By the general theory of cellular algebra \cite[Proposition 2.6 and Theorem 3.4]{GL},
$\End_{FR_n}(R^{\lambda})\cong F$ for all $\lambda\vdash r$ with $0\leqslant r\leqslant n$, i.e.,
each Specht module $R^{\lambda}$ is absolutely irreducible. That means the field
$F$ is a splitting field for $FR_n$ and hence $D_{\lambda}=F$.

Let $\lambda\vdash r$ be a partition with $0\leqslant r\leqslant n$ and $\mathfrak{t},\mathfrak{s}\in T(\lambda)$.
We have from the equation $(3.3)$ that $I_{\lambda}R^{\mu}=0$ unless $\lambda=\mu$
and $I_{\lambda}R^{\lambda}=R^{\lambda}$. Note that the nonzero ideal
$\langle e(\mathfrak{t})\rangle$ is a sum of certain ideals $I_{\mu}$.
Then it follows from Lemma \ref{xx3.10} that the nonzero two-sided ideal $\langle e(\mathfrak{t})\rangle
=I_{\lambda}$ and hence $\langle e(\mathfrak{t})\rangle=\langle e(\mathfrak{s})\rangle$ is minimal.
Especially, $I(\lambda)=\langle e(\mathfrak{t^{\lambda}})\rangle=I_{\lambda}$
and the theorem is proved.
\end{proof}

\section{Proof of Theorem \ref{xx1.2}}\label{xxsec4}

In this section we shall give the main result of this paper. That is, the proof of Theorem \ref{xx1.2}.

For any positive integer $k\leqslant n$, the natural map$
s_i\mapsto s_i, p_j\mapsto p_j$
for all $1\leqslant i\leqslant k-1$ and $1\leqslant j\leqslant k$ extends to an algebra embedding from
$FR_k$ into $FR_n$, i.e., $R_k$ considered as the submonoid $R_{\{1,2,\ldots,k\}}$.
From this point of view, when $m<n$ (where $m=\dim(V)$), the anti-symmetrizer $Y_{m+1}:=
Y_{\{1,2,\ldots,m+1\}}$ of $FR_{m+1}$ is an element of $FR_n$. That is
$$
Y_{m+1}=\sum_{\sigma\in\mathfrak{S}_{m+1}}\sgn(\sigma)\sigma +\sum_{D\in {\rm Rd}_{m+1}[1]}\sgn(D) D\in FR_n.
$$
By Theorem \ref{xx3.12}, the two-sided ideal $\Ann_{FR_n}\bigl( U^{\otimes n}\bigr)
=\Ker(\varphi)$ is a sum of certain ideals $I(\lambda)$. We define a two-sided ideal
$$
I_{m+1}=\sum_{r=0}^n \sum_{\substack{\lambda\vdash r\\
\ell(\lambda)\geqslant m+1}}I(\lambda).
$$
We shall prove Theorem \ref{xx1.2} in three parts by showing that
$$
\langle Y_{m+1}\rangle \subseteq \Ker(\varphi)\subseteq I_{m+1}\subseteq\langle Y_{m+1}\rangle.
$$

For convenience, we set $I(m,n):=\{(i_1,\ldots,i_n)\ |\ i_j\in\{0,1,\ldots,m\}, \forall j\}$.
For any $\underline{i}=(i_1,\ldots,i_n)\in I(m,n)$, we write $v_{\underline{i}}=v_{i_1}\otimes
\cdots\otimes v_{i_n}$ for a simple tensor. Let's start the proof by a technical lemma.

\begin{lemma}\label{xx4.1}
Let $D=d_1^{-1}p_1p_2\cdots p_r\sigma d_2$ be a rook $n$-diagram defined in Proposition \ref{xx2.1}.
For any simple tensor $v_{\underline{i}}\in U^{\otimes n}$, if
$Dv_{\underline{i}}\neq 0$, then $Dv_{\underline{i}}=d_1^{-1}\sigma d_2v_{\underline{i}}$.
\end{lemma}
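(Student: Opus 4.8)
The plan is to unwind the factorization $D=d_1^{-1}p_1p_2\cdots p_r\sigma d_2$ and track its action on the simple tensor $v_{\underline{i}}$ one factor at a time, using the two basic features of the $FR_n$-action recalled in Section \ref{xxsec2.1}: each $s_j$ (and hence any permutation) acts by permuting the tensor positions, while each $p_j$ acts as a projection that keeps a simple tensor whose $j$-th factor is $v_0$ and kills it otherwise. The key observation to be exploited is that $p_1p_2\cdots p_r$ is idempotent-like: on a simple tensor it acts either as the identity or as zero. Thus once we know $Dv_{\underline{i}}\neq 0$, the projection part cannot have altered anything, which is exactly the content of the lemma.

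First I would note that $\sigma\in\mathfrak{S}_{\{r+1,\ldots,n\}}$ and $d_2\in\mathcal{D}_r$ both lie in $\mathfrak{S}_n$, so they act on $U^{\otimes n}$ as place permutations. Consequently $\sigma d_2 v_{\underline{i}}$ is again a simple tensor, say $v_{\underline{j}}=v_{j_1}\otimes\cdots\otimes v_{j_n}$, whose index string $(j_1,\ldots,j_n)$ is merely a rearrangement of $(i_1,\ldots,i_n)$; in particular each $j_k\in\{0,1,\ldots,m\}$. Next I would apply the projections. Since $p_1,\ldots,p_r$ act on distinct positions they commute, and by the defining formula $p_k$ returns $v_{\underline{j}}$ when $j_k=0$ and returns $0$ otherwise. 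Hence
$$
p_1p_2\cdots p_r\,v_{\underline{j}}=\Bigl(\textstyle\prod_{k=1}^{r}\delta_{j_k,0}\Bigr)v_{\underline{j}},
$$
which equals $v_{\underline{j}}$ when $j_1=\cdots=j_r=0$ and equals $0$ in every other case.

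Finally, $d_1^{-1}$ is once more a place permutation, hence invertible on $U^{\otimes n}$, so it cannot annihilate a nonzero tensor. Therefore $Dv_{\underline{i}}=d_1^{-1}\bigl(p_1\cdots p_r\,v_{\underline{j}}\bigr)$ is nonzero exactly when $p_1\cdots p_r\,v_{\underline{j}}=v_{\underline{j}}$, and in that case
$$
Dv_{\underline{i}}=d_1^{-1}v_{\underline{j}}=d_1^{-1}\sigma d_2\,v_{\underline{i}},
$$
as required. There is no genuine obstacle here; the argument is a direct computation. The only point requiring care is the left-action convention, so that the operators are applied in the correct order (first $d_2$, then $\sigma$, then the $p_k$, then $d_1^{-1}$), and the elementary dichotomy that the hypothesis $Dv_{\underline{i}}\neq 0$ forces the product of projections $p_1\cdots p_r$ to act as the identity rather than as zero.
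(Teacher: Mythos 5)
Your proof is correct and follows essentially the same route as the paper's: a direct factor-by-factor computation in which the permutations $d_2,\sigma,d_1^{-1}$ act as (invertible) place permutations and the product $p_1\cdots p_r$ acts on a simple tensor either as the identity or as zero, so that $Dv_{\underline{i}}\neq 0$ forces the projections to act trivially. The only cosmetic difference is that the paper reads off the non-vanishing condition $i_{(1)d_2}=\cdots=i_{(r)d_2}=0$ from the diagrammatic description of the isolated bottom vertices and commutes $\sigma$ past the $p_k$'s, whereas you deduce the same condition from the invertibility of $d_1^{-1}$; the underlying computation is identical.
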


\begin{proof}
By Proposition \ref{xx2.1}, the isolated vertices in the bottom row of $D$ are labeled by
$((1)d_2)^-$, $((2)d_2)^-$, $\ldots, ((r)d_2)^-$. Hence if $Dv_{\underline{i}}\neq 0$, there is
$i_{(1)d_2}=i_{(2)d_2}=\cdots=i_{(r)d_2}=0$. In this case, we have
$$\begin{aligned}
Dv_{i_1}\otimes v_{i_2}\otimes\cdots\otimes v_{i_n}&=(d_1^{-1}p_1p_2\cdots p_r\sigma d_2)v_{i_1}\otimes v_{i_2}
\otimes\cdots\otimes v_{i_n}\\
&=d_1^{-1}\sigma p_1p_2\cdots p_r v_{i_{(1)d_2}}\otimes\cdots\otimes v_{i_{(r)d_2}}\otimes
v_{i_{(r+1)d_2}}\otimes\cdots\otimes v_{i_{(n)d_2}}\\
&=d_1^{-1}\sigma v_{i_{(1)d_2}}\otimes\cdots\otimes v_{i_{(r)d_2}}\otimes
v_{i_{(r+1)d_2}}\otimes\cdots\otimes v_{i_{(n)d_2}}\\
&=d_1^{-1}\sigma d_2v_{i_1}\otimes v_{i_2}\otimes\cdots\otimes v_{i_n}.
\end{aligned}$$
This completes the proof of the lemma.
\end{proof}

\begin{lemma}\label{xx4.2}
With notations as above, there is $\langle Y_{m+1}\rangle \subseteq \Ker(\varphi)$.
\end{lemma}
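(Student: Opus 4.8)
The plan is to reduce the stated inclusion of two-sided ideals to a single membership, and then to settle that membership using the two defining relations satisfied by the anti-symmetrizer rather than its explicit expansion. Since $\Ker(\varphi)=\Ann_{FR_n}(U^{\otimes n})$ is a two-sided ideal of $FR_n$, in order to prove $\langle Y_{m+1}\rangle\subseteq\Ker(\varphi)$ it is enough to show that $Y_{m+1}\in\Ker(\varphi)$, i.e. that $Y_{m+1}v_{\underline{i}}=0$ for every simple tensor $v_{\underline{i}}\in U^{\otimes n}$. As $Y_{m+1}=Y_{\{1,2,\ldots,m+1\}}$ lies in the subalgebra $FR_{m+1}\subseteq FR_n$, it acts only on the first $m+1$ tensor slots; writing $w=v_{i_1}\otimes\cdots\otimes v_{i_{m+1}}$, we have $Y_{m+1}v_{\underline{i}}=(Y_{m+1}w)\otimes v_{i_{m+2}}\otimes\cdots\otimes v_{i_n}$, so the whole problem reduces to proving $Y_{m+1}w=0$ in $U^{\otimes(m+1)}$.

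First I would invoke the structural relations for the anti-symmetrizer $Y_S$ recalled in the paragraph preceding Definition \ref{xx3.9}: since $S=\{1,2,\ldots,m+1\}$, one has $s_{ij}Y_{m+1}=-Y_{m+1}$ for all $1\leqslant i<j\leqslant m+1$ and $p_iY_{m+1}=0$ for all $1\leqslant i\leqslant m+1$. Applying these to $w$ and using that $s_{ij}$ acts on $U^{\otimes(m+1)}$ by interchanging the $i$-th and $j$-th factors while $p_i$ acts as the projection isolating the $v_0$-component of the $i$-th factor, I extract two constraints on the vector $Y_{m+1}w$. The relations $s_{ij}Y_{m+1}=-Y_{m+1}$ give $\sigma(Y_{m+1}w)=\sgn(\sigma)(Y_{m+1}w)$ for all $\sigma\in\mathfrak{S}_{m+1}$, so $Y_{m+1}w$ is an alternating tensor in the $m+1$ factors. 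The relations $p_i(Y_{m+1}w)=0$ say that $Y_{m+1}w$ lies in $\Ker(p_i)$ for each $i$, which forces every basis tensor occurring in $Y_{m+1}w$ to have all of its indices in $\{1,\ldots,m\}$, that is, to contain no $v_0$.

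Combining the two constraints, $Y_{m+1}w$ is an alternating tensor supported on the subspace $V^{\otimes(m+1)}$, where $V$ is spanned by $v_1,\ldots,v_m$; equivalently $Y_{m+1}w\in\textstyle\bigwedge^{m+1}V$. But $\dim V=m<m+1$, so by the pigeonhole principle any basis tensor with all $m+1$ indices in $\{1,\ldots,m\}$ repeats an index and hence is killed by the alternation; thus $\bigwedge^{m+1}V=0$ and $Y_{m+1}w=0$. This yields $Y_{m+1}\in\Ker(\varphi)$, whence $\langle Y_{m+1}\rangle\subseteq\Ker(\varphi)$.

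I expect the one point that needs care to be the second constraint, namely verifying that $p_i(Y_{m+1}w)=0$ genuinely removes $v_0$ from the first $m+1$ tensor slots; this rests on reading $p_i$ correctly as the idempotent projecting the $i$-th factor onto $Fv_0$, so that its kernel is spanned by the basis tensors whose $i$-th index is nonzero. Beyond that the argument is purely formal: it never uses the explicit expansion of $Y_{m+1}$ into $\mathfrak{S}_{m+1}$-terms and ${\rm Rd}_{m+1}[1]$-terms, but runs entirely on the two relations $s_{ij}Y_{m+1}=-Y_{m+1}$ and $p_iY_{m+1}=0$ together with the dimension bound $\dim V=m$.
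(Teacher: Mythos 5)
Your proof is correct, and it takes a genuinely different route from the paper's. The paper argues by direct computation on simple tensors: after the same reduction to $n=m+1$, it splits into two cases. For a tensor with a repeated index it uses $Y_{m+1}s_{jk}=-Y_{m+1}$, much as you do; but for a tensor with pairwise distinct indices (necessarily a permutation of $v_0\otimes v_1\otimes\cdots\otimes v_m$) it expands $Y_{m+1}$ explicitly into its $\mathfrak{S}_{m+1}$-part and its ${\rm Rd}_{m+1}[1]$-part, applies Lemma \ref{xx4.1} to turn each surviving diagram $d^{-1}p_1\sigma$ into the permutation $d^{-1}\sigma$ carrying the sign $-\sgn(d^{-1}\sigma)$, and then cancels the two resulting sums using that $\{d^{-1}\ |\ d\in\mathcal{D}_1\}$ is a set of left coset representatives of $\mathfrak{S}_{\{1\}}\times\mathfrak{S}_{\{2,\ldots,m+1\}}$ in $\mathfrak{S}_{m+1}$. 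You bypass the expansion, Lemma \ref{xx4.1}, and the coset cancellation entirely: the relation $p_iY_{m+1}=0$ (asserted in the paragraph preceding Definition \ref{xx3.9}, and valid because $\langle Y_S\rangle$ is the one-dimensional two-sided ideal of $FR_S$ affording $\rho_2$, so left multiplication by any $a$ acts on it by the scalar $\rho_2(a)$) forces the output $Y_{m+1}w$ to be supported on basis tensors containing no $v_0$, while $s_{ij}Y_{m+1}=-Y_{m+1}$ makes that output alternating; since any alternating tensor in $V^{\otimes(m+1)}$ with $\dim V=m$ is supported on tuples with a repeated index and hence vanishes in characteristic $0$, you conclude $Y_{m+1}w=0$. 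What your approach buys: it is shorter and more structural, isolating exactly which property of $Y_{m+1}$ is used (that it generates a copy of the sign-type one-dimensional ideal), and it handles the $v_0$-containing and $v_0$-free tensors uniformly instead of by cases. What the paper's approach buys: it is self-contained at the level of diagram combinatorics, verifying the key cancellation by hand rather than leaning on the ``by definition'' relations of Section \ref{xxsec3}, whose justification the paper leaves brief.
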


\begin{proof}
For any simple tensor $v_{\underline{i}}\in U^{\otimes n}$, we only need to proof $Y_{m+1}v_{\underline{i}}=0$.
By the actions of rook monoids on $n$-tensor spaces
defined in Section \ref{xxsec2.1}, we know that $Y_{m+1}$ only acts on the first $m+1$ components
of $v_{\underline{i}}$. Hence we can assume $n=m+1$ without lose of the generality. For an arbitrary
simple tensor $v_{\underline{i}}=v_{i_1}\otimes v_{i_2}\otimes\cdots\otimes v_{i_{m+1}}$, if the
$(m+1)$-tuple $(i_1,i_2,\ldots,i_{m+1})$ has a repeated number, for instance, $i_j=i_k$ with $j<k$,
then obviously $Y_{m+1}v_{\underline{i}}=Y_{m+1}s_{jk}v_{\underline{i}}=-Y_{m+1}v_{\underline{i}}$
and hence $Y_{m+1}v_{\underline{i}}=0$, where $s_{jk}$ is the transposition which interchanges $j$
and $k$.

Then, we assume that $i_1,i_2,\ldots,i_{m+1}$ are different with each other. Noting that $\dim(V)=m$,
we can assume $v_{\underline{i}}=v_0\otimes v_1\otimes\cdots\otimes v_m$ without lose of the generality.
Therefore, for each $D\in {\rm Rd}_{m+1}[1]$, $Dv_{\underline{i}}\neq 0$ implies that the first vertex
in the bottom row of $D$ is isolated. In other words, $D=d_1^{-1}p_1\sigma$ with $d_1\in\mathcal{D}_1$,
$\sigma\in\mathfrak{S}_{\{2,3,\ldots,m+1\}}$. Then we have from Lemma \ref{xx4.1} that
$$\begin{aligned}
Y_{m+1}v_{\underline{i}}&=\sum_{\sigma\in\mathfrak{S}_{m+1}}\sgn(\sigma)\sigma v_{\underline{i}}
+\sum_{D\in {\rm Rd}_{m+1}[1]}\sgn(D) D v_{\underline{i}}\\
&=\sum_{\sigma\in\mathfrak{S}_{m+1}}\sgn(\sigma)\sigma v_{\underline{i}}
+\sum_{\substack{D=d^{-1}p_1\sigma\\ d\in\mathcal{D}_1, \sigma\in\mathfrak{S}_{\{2,\ldots,m+1\}}}}\sgn(D)D v_{\underline{i}}\\
&=\sum_{\sigma\in\mathfrak{S}_{m+1}}\sgn(\sigma)\sigma v_{\underline{i}}
-\sum_{\substack{d^{-1}\sigma\\ d\in\mathcal{D}_1, \sigma\in\mathfrak{S}_{\{2,\ldots,m+1\}}}}\sgn(d^{-1}\sigma)d^{-1}\sigma v_{\underline{i}}\\
&=\left(\sum_{\sigma\in\mathfrak{S}_{m+1}}\sgn(\sigma)\sigma-
\sum_{\substack{d^{-1}\sigma\\ d\in\mathcal{D}_1, \sigma\in\mathfrak{S}_{\{2,\ldots,m+1\}}}}\sgn(d^{-1}\sigma)d^{-1}\sigma \right)v_{\underline{i}}\\
&=0,
\end{aligned}$$
where the last identity follows from the fact that $\{d^{-1}|d\in\mathcal{D}_1\}$ is a set of
left coset representatives of $\mathfrak{S}_{\{1\}}\times \mathfrak{S}_{\{2,\ldots,m+1\}}$ in $\mathfrak{S}_{m+1}$.
\end{proof}

\begin{lemma}\label{xx4.3}
With notations as above, there is $\Ker(\varphi)\subseteq I_{m+1}$.
\end{lemma}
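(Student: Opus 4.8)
The plan is to exploit the block decomposition from Theorem~\ref{xx3.12}. Since $FR_n$ is semisimple and $\Ker(\varphi)$ is a two-sided ideal, $\Ker(\varphi)$ is a direct sum of some of the minimal two-sided ideals $I(\lambda)$. Because each $I(\lambda)$ is minimal (simple as an algebra), for a fixed $\lambda$ we have either $I(\lambda)\subseteq\Ker(\varphi)$ or $I(\lambda)\cap\Ker(\varphi)=0$; and the former holds if and only if the generator $e(\mathfrak{t}^{\lambda})$ annihilates $U^{\otimes n}$. Thus the desired inclusion $\Ker(\varphi)\subseteq I_{m+1}$ is equivalent to the assertion that for every $\lambda$ with $\ell(\lambda)\leqslant m$ the element $e(\mathfrak{t}^{\lambda})$ acts nontrivially on $U^{\otimes n}$. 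So I would reduce everything to exhibiting, for each such $\lambda\vdash r$, a single simple tensor that is not killed by $e(\mathfrak{t}^{\lambda})$.

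For the explicit tensor, write $\ell=\ell(\lambda)\leqslant m$ and take $v_{\underline{i}}$ with $i_{t}=i$ whenever the position $t\in\{1,\ldots,r\}$ lies in the $i$-th row of the tableau $\mathfrak{t}^{\lambda}$, and $i_t=0$ for $r<t\leqslant n$. Here $i\leqslant\ell\leqslant m$, so each $v_i$ is a legitimate basis vector of $V$; this is the only place the hypothesis $\ell(\lambda)\leqslant m$ is used. Reading $e(\mathfrak{t}^{\lambda})$ from Definition~\ref{xx3.9} right to left, the factor $\prod_{i\notin {\rm cont}(\mathfrak{t}^{\lambda})}p_i=p_{r+1}\cdots p_n$ fixes $v_{\underline{i}}$, since positions $r+1,\ldots,n$ already carry $v_0$.

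The key point is that every diagram with an isolated vertex occurring inside the factors $X_{\mathcal{R}_i}$ and $Y_{\mathcal{C}_j}$ vanishes on this tensor. Indeed, by the explicit form in Proposition~\ref{xx3.6}, $X_S$ and $Y_S$ consist of the group-algebra (anti)symmetrizer of $\mathfrak{S}_S$ together with terms $D$ lying in ${\rm Rd}_S[r']$ with $r'\geqslant1$; by Lemma~\ref{xx4.1} such a $D$ annihilates any tensor whose bottom isolated positions are not labelled $0$. Since every position in $\mathcal{R}_i$ carries the nonzero vector $v_i$, since every position in $\mathcal{C}_j$ carries one of the distinct nonzero vectors indexed by the rows meeting column $j$, and since the surviving symmetric-group terms only permute positions within a single row or a single column, this nonvanishing of the labels is preserved as the factors are applied one after another. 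Hence only the $\mathfrak{S}_r$-parts survive, and $e(\mathfrak{t}^{\lambda})$ acts on $v_{\underline{i}}$ exactly as the classical Young symmetrizer $\bigl(\sum_q\sgn(q)q\bigr)\bigl(\sum_p p\bigr)$ of $\mathfrak{t}^{\lambda}$ acts on $V^{\otimes r}$, up to the nonzero scalar $\prod_i\lambda_i!$.

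Finally I would invoke the classical computation (for instance \cite[\S 14.7]{van}): applied to the tensor that places $v_i$ in each box of the $i$-th row, the Young symmetrizer of $\mathfrak{t}^{\lambda}$ yields a nonzero element of $V^{\otimes r}$ precisely when $\ell(\lambda)\leqslant\dim V=m$. As every column of $\mathfrak{t}^{\lambda}$ meets at most $\ell\leqslant m$ rows, the vectors anti-symmetrized in each $Y_{\mathcal{C}_j}$ are distinct, so no column wedge degenerates and $e(\mathfrak{t}^{\lambda})v_{\underline{i}}\neq0$. Therefore $I(\lambda)\not\subseteq\Ker(\varphi)$ for every $\lambda$ with $\ell(\lambda)\leqslant m$, whence $\Ker(\varphi)\subseteq I_{m+1}$. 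I expect the main obstacle to be the bookkeeping in the third step---verifying carefully that applying the factors of $e(\mathfrak{t}^{\lambda})$ in order never produces a $0$-labelled position that a later $p$-containing diagram could exploit---rather than the classical nonvanishing, which is standard.
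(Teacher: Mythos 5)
Your proposal is correct and follows essentially the same route as the paper: reduce via the block decomposition of Theorem \ref{xx3.12} to showing $e(\mathfrak{t}^{\lambda})U^{\otimes n}\neq 0$ whenever $\ell(\lambda)\leqslant m$, and then verify this on exactly the same witness tensor (row $i$ of $\mathfrak{t}^{\lambda}$ labelled by $v_i$, remaining positions by $v_0$), with the diagram terms dying because every position in $\{1,\ldots,r\}$ carries a nonzero label. The only cosmetic difference is the last step: where you cite the classical nonvanishing of the Young symmetrizer, the paper simply notes that the coefficient of $v_{\underline{i}}$ in $e(\mathfrak{t}^{\lambda})v_{\underline{i}}$ equals $|R_{\mathfrak{t}^{\lambda}}|=\lambda_1!\cdots\lambda_m!\neq 0$ since no nontrivial column permutation fixes $v_{\underline{i}}$.
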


\begin{proof}
As mentioned at the beginning of this section, $\Ker(\varphi)$ is a sum of certain ideals $I(\lambda)$.
We equivalently show that if $I(\lambda)\nsubseteq I_{m+1}$, then $I(\lambda)\nsubseteq \Ker(\varphi)$.
Taking a partition $\lambda$ such that $I(\lambda)\nsubseteq I_{m+1}$, there exists an integer
$0\leqslant r\leqslant n$ such that $\lambda\vdash r$ and $\ell(\lambda)\leqslant m$.
We shall prove $I(\lambda)\nsubseteq \Ker(\varphi)$ by finding a simple tensor $v_{\underline{i}}\in U^{\otimes n}$
such that $e(\mathfrak{t}^{\lambda})v_{\underline{i}}\neq 0$.

Let$$
v_{\underline{i}}=\underbrace{v_1\otimes\cdots\otimes v_1}_{\text{$\lambda_1$ copies}}\otimes
\underbrace{v_2\otimes\cdots\otimes v_2}_{\text{$\lambda_2$
copies}}\otimes\cdots\otimes \underbrace{v_m\otimes\cdots\otimes
v_m}_{\text{$\lambda_m$ copies}}\otimes\underbrace{v_0\otimes\cdots\otimes v_0}_{\text{$n-r$ copies}}.
$$
For each rook $n$-diagram $D\in R_n^{(n-r+1)}$, it is clear that $Dv_{\underline{i}}=0$. Therefore,
$$\begin{aligned}
e(\mathfrak{t}^{\lambda})v_{\underline{i}}&=\sum_{q\in C_{\mathfrak{t^{\lambda}}}}
\sum_{p\in R_{\mathfrak{t^{\lambda}}}}\sgn(q)qp v_{\underline{i}}\\
&=|R_{\mathfrak{t^{\lambda}}}|\sum_{q\in C_{\mathfrak{t^{\lambda}}}}\sgn(q)q v_{\underline{i}},
\end{aligned}$$
where $|R_{\mathfrak{t^{\lambda}}}|=\lambda_1!\lambda_2!\cdots \lambda_m!$ is the order of the group
$R_{\mathfrak{t^{\lambda}}}$. For each $q\in C_{\mathfrak{t^{\lambda}}}$, if $q\neq 1$, then
$qv_{\underline{i}}\neq v_{\underline{i}}$, since each element of $C_{\mathfrak{t^{\lambda}}}$ except the identity
takes at least one entry of some column of $\mathfrak{t^{\lambda}}$ to a different row.
Hence the coefficient of $v_{\underline{i}}$ in $e(\mathfrak{t}^{\lambda})v_{\underline{i}}$
is $|R_{\mathfrak{t^{\lambda}}}|\neq 0$, and this completes the proof of this lemma.
\end{proof}

Let $\lambda\vdash r$ be a partition with $0\leqslant r\leqslant n$. Let $\mathfrak{t}_{\lambda}$ be
the $\lambda_r^n$-tableau in which the numbers $1,2,\ldots,r$ appear in order along successive columns.

\begin{lemma}\label{xx4.4}
With notations as above, there is $I_{m+1}\subseteq\langle Y_{m+1}\rangle$.
\end{lemma}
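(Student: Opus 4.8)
The goal is to show every generator $e(\mathfrak{t}^\lambda)$ with $\ell(\lambda)\geqslant m+1$ lies in the two-sided ideal $\langle Y_{m+1}\rangle$, which suffices since $I_{m+1}$ is the sum of the $I(\lambda)=\langle e(\mathfrak{t}^\lambda)\rangle$ over such $\lambda$.

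The plan is to exploit the column structure of $\mathfrak{t}_\lambda$. Since $\ell(\lambda)\geqslant m+1$, the first column $\mathcal{C}_1$ of $\mathfrak{t}_\lambda$ contains at least $m+1$ entries, so the anti-symmetrizer $Y_{\mathcal{C}_1}$ of the rook submonoid $R_{\mathcal{C}_1}$ factors through a copy of $Y_{m+1}$. More precisely, I would first relate $e(\mathfrak{t}^\lambda)$ to $e(\mathfrak{t}_\lambda)$: since $\mathfrak{t}^\lambda$ and $\mathfrak{t}_\lambda$ have the same content $\{1,\dots,r\}$ and differ only by a permutation $w\in\mathfrak{S}_r$ rearranging entries, the two generators are conjugate-related in the sense that $\langle e(\mathfrak{t}^\lambda)\rangle=\langle e(\mathfrak{t}_\lambda)\rangle$ (both equal the minimal ideal $I(\lambda)$ by Theorem \ref{xx3.12}). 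So it is enough to show $e(\mathfrak{t}_\lambda)\in\langle Y_{m+1}\rangle$. The element $e(\mathfrak{t}_\lambda)$ begins with the factor $Y_{\mathcal{C}_1}$, where $\mathcal{C}_1=\{1,2,\ldots,\ell(\lambda)\}$ by the column-reading labelling.

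The key step is then to show that the anti-symmetrizer $Y_S$ of a rook monoid $R_S$ on a set $S$ with $|S|\geqslant m+1$ lies in $\langle Y_{m+1}\rangle$. I would argue that $Y_S$ can be written as $Y_S = a\, Y_{S'}\, b$ (or more simply obtained from $Y_{S'}$ by two-sided multiplication and summation over coset representatives) where $S'\subseteq S$ has exactly $m+1$ elements, mirroring the classical fact that the anti-symmetrizer on $k$ letters is a two-sided multiple of the anti-symmetrizer on any $m+1\leqslant k$ of them. Concretely, fixing $S'=\{i_1,\dots,i_{m+1}\}\subseteq S$, one can express $Y_S$ as $\bigl(\sum_{\text{coset reps }d}\sgn(d)d\bigr)\,Y_{S'}$ times the analogous factor handling the $p_i$-terms; since $Y_{S'}$ is a translate of $Y_{m+1}$ under the embedding $R_{S'}\hookrightarrow R_n$ and the ideal $\langle Y_{m+1}\rangle$ is two-sided, this places $Y_S$ inside $\langle Y_{m+1}\rangle$. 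Because $e(\mathfrak{t}_\lambda)=Y_{\mathcal{C}_1}\cdot(\text{rest})$ and $|\mathcal{C}_1|=\ell(\lambda)\geqslant m+1$, the whole product lies in the ideal.

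The main obstacle is the rook-monoid bookkeeping: unlike in the symmetric group, $Y_S$ contains not just signed permutations but also the lower-rank diagrams ${\rm Rd}[j]$ with their sign-weighted coefficients (cf.\ Lemma \ref{xx3.8}), so the factorization $Y_S\in\langle Y_{S'}\rangle$ must be checked to respect those extra $p_i$-terms and their signs. I expect the cleanest route is to verify directly, using the generator relations $s_{ij}Y_S=-Y_S$ and $p_iY_S=0$ for $i\in S$, that $Y_S$ and the candidate product $c\cdot(\text{elt})\,Y_{S'}$ satisfy the same defining relations (both are killed on the left by $p_i$ and anti-symmetric under $\mathfrak{S}_S$), and that they agree on the unique one-dimensional $\rho_2$-isotypic component, so that they coincide up to the scalar already computed in Proposition \ref{xx3.6}. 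Once $Y_{\mathcal{C}_1}\in\langle Y_{m+1}\rangle$ is established, the remaining factors of $e(\mathfrak{t}_\lambda)$ are simply absorbed into the two-sided ideal, completing the chain of inclusions and hence the proof of Theorem \ref{xx1.2}.
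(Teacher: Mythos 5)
Your frame matches the paper's: by Theorem \ref{xx3.12} it suffices to show $e(\mathfrak{t}_{\lambda})\in\langle Y_{m+1}\rangle$, and the first column of $\mathfrak{t}_{\lambda}$ is $\mathcal{C}_1=\{1,\ldots,l\}$ with $l=\ell(\lambda)\geqslant m+1$. The gap is that the one step carrying all the weight --- $Y_{\mathcal{C}_1}\in\langle Y_{m+1}\rangle$ --- is never proved, and both mechanisms you float for it are defective. The coset factorization $Y_S=\bigl(\sum_d\sgn(d)d\bigr)Y_{S'}$, valid for symmetric-group anti-symmetrizers, is false in $FR_n$: in the product $\bigl(\sum_d\sgn(d)d\bigr)Y_{S'}$ every rank-deficient diagram $dD$, $D\in{\rm Rd}_{S'}[1]$, keeps the isolated bottom vertex of $D$, hence inside $S'$, whereas $Y_S$ involves all of ${\rm Rd}_S[1]$, whose isolated bottom vertices range over all of $S$ (multiplying on the other side fails symmetrically for top vertices). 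Your fallback --- ``same defining relations, hence equal up to a scalar'' --- can in principle be repaired, but not as stated: the space $\{x:\sigma x=\sgn(\sigma)x,\ p_ix=0\ \forall\, i\in S\}$ is one-dimensional only inside $FR_S$, where it is the $\rho_2$-isotypic component of the regular representation of the split semisimple algebra $FR_S$; inside $FR_n$ it is far from one-dimensional. So you would need a candidate lying in $FR_S$ that genuinely satisfies all the relations (by the uniqueness you invoke, the coset-sum candidate cannot, since its rank-deficient part is wrong), and you would need the resulting scalar to be nonzero --- Proposition \ref{xx3.6} does not supply that scalar; it only computes the coefficients occurring in $X_2$ itself. (A completed version of your route does exist, e.g.\ $Y_SY_{S'}Y_S=(m+1)!\,|S|!\,Y_S$, with the scalar read off by applying $\rho_2$; but proving that identity already requires the computation below.)

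The repair is a two-line computation using exactly the two relations you quote, multiplied in the opposite order: instead of trying to build $Y_{\mathcal{C}_1}$ out of $Y_{m+1}$, hit it with $Y_{m+1}$ on the left. Writing $Y_l:=Y_{\mathcal{C}_1}$, every $\sigma\in\mathfrak{S}_{m+1}\subseteq\mathfrak{S}_l$ satisfies $\sgn(\sigma)\sigma Y_l=Y_l$, and every $D\in{\rm Rd}_{m+1}[1]$ factors as $D=d_1^{-1}p_1\sigma d_2$ with $\sigma d_2\in\mathfrak{S}_{m+1}$, so $DY_l=\pm\, d_1^{-1}p_1Y_l=0$. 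Hence
$$
Y_{m+1}Y_l=(m+1)!\,Y_l,\qquad\text{and therefore}\qquad
Y_{m+1}e(\mathfrak{t}_{\lambda})=(m+1)!\,e(\mathfrak{t}_{\lambda}),
$$
since $Y_l$ is the leftmost factor of $e(\mathfrak{t}_{\lambda})$. As $F$ has characteristic $0$, this gives $e(\mathfrak{t}_{\lambda})=\frac{1}{(m+1)!}Y_{m+1}e(\mathfrak{t}_{\lambda})\in\langle Y_{m+1}\rangle$, and the remaining factors are absorbed exactly as you say. This absorption identity is the paper's entire proof of the lemma; it is precisely the piece missing from your sketch.
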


\begin{proof}
Let $\lambda\vdash r$ with $0\leqslant r\leqslant n$ and $\ell(\lambda)\geqslant m+1$. We need to prove
$I(\lambda)\subseteq \langle Y_{m+1}\rangle$. It follows from the proof of Theorem \ref{xx3.12} that
$I(\lambda)=\langle e(\mathfrak{t}_{\lambda})\rangle$ and hence we only need to show $e(\mathfrak{t}_{\lambda})
\in \langle Y_{m+1}\rangle$.

Since $\ell(\lambda)\geqslant m+1$, the set $\mathcal{C}_1$ of entries in the first column of $\mathfrak{t}_{\lambda}$
is $\{1,2,\ldots,l\}$, where $l=\ell(\lambda)$.
By a direct computation, we have
$$\begin{aligned}
Y_{m+1}e(\mathfrak{t}_{\lambda})&=Y_{m+1}Y_{\mathcal{C}_1}Y_{\mathcal{C}_2}\cdots Y_{\mathcal{C}_{\lambda_1}}
X_{\mathcal{R}_1}X_{\mathcal{R}_2}\cdots X_{\mathcal{R}_{\ell(\lambda)}}
\prod_{i\not\in {\rm cont}(\mathfrak{t})}p_i\\
&=Y_{m+1}Y_{l}Y_{\mathcal{C}_2}\cdots Y_{\mathcal{C}_{\lambda_1}}
X_{\mathcal{R}_1}X_{\mathcal{R}_2}\cdots X_{\mathcal{R}_{\ell(\lambda)}}
\prod_{i\not\in {\rm cont}(\mathfrak{t})}p_i\\
&=(m+1)!Y_{l}Y_{\mathcal{C}_2}\cdots Y_{\mathcal{C}_{\lambda_1}}
X_{\mathcal{R}_1}X_{\mathcal{R}_2}\cdots X_{\mathcal{R}_{\ell(\lambda)}}
\prod_{i\not\in {\rm cont}(\mathfrak{t})}p_i\\
&=(m+1)!e(\mathfrak{t}_{\lambda}).
\end{aligned}$$
Therefore $e(\mathfrak{t}_{\lambda})\in \langle Y_{m+1}\rangle$ and this completes the proof of the lemma.
\end{proof}

\noindent{\bf Proof of Theorem \ref{xx1.2}.} It follows immediately from Lemmas \ref{xx4.2}, \ref{xx4.3}
and \ref{xx4.4}. \qed

\end{document}